%%%%%%%%%%%%%%%%%%%%%%%%%%%%%%%%%%%%%%%%%%%%%%%%%%%%%%%%%%%%%%%%%%%%%%%%%%%%%%%%
%2345678901234567890123456789012345678901234567890123456789012345678901234567890
%        1         2         3         4         5         6         7         8

% \documentclass[journal,onecolumn,doublespace]{IEEEtran}
\documentclass[12pt,draftcls,onecolumn]{IEEEtran}
%\documentclass[journal]{IEEEtran}
%\documentclass[letterpaper, 10 pt, conference]{IEEEtran}  % Comment this line out
                                                          % if you need a4paper
%\documentclass[a4paper, 10pt, conference]{ieeeconf}      % Use this line for a4
                                                          % paper

\IEEEoverridecommandlockouts                              % This command is only
                                                          % needed if you want to
                                                          % use the \thanks command
% \overrideIEEEmargins
% See the \addtolength command later in the file to balance the column lengths
% on the last page of the document

% The following packages can be found on http:\\www.ctan.org
%\usepackage{graphics} % for pdf, bitmapped graphics files
%\usepackage{epsfig} % for postscript graphics files
%\usepackage{mathptmx} % assumes new font selection scheme installed
%\usepackage{times} % assumes new font selection scheme installed
%\usepackage{amsmath} % assumes amsmath package installed
%\usepackage{amssymb}  % assumes amsmath package installed

\makeatletter
\let\NAT@parse\undefined
\makeatother

\usepackage{graphicx}% Include figure files
\usepackage{bm}% bold math
\usepackage{epsfig}
\usepackage{amsthm}
\usepackage{amssymb,amsfonts,amsmath}
\usepackage{subfigure}
\usepackage{setspace}
\usepackage{enumerate}
\usepackage[numbers]{natbib}
\usepackage{color}
\usepackage{dsfont}
\usepackage{verbatim}

\newtheorem{theorem}{Theorem}
\newtheorem{lemma}{Lemma}
\newtheorem{remark}{Remark}

\newtheorem{example}{Example}

\newtheorem{corollary}{Corollary}

\newcommand{\cred}[1]{{\color{red}{#1}}}

\def \s {\sigma}

\def \< {\langle}
\def \> {\rangle}
\def \d {\delta}
\def \O {\Omega}

\def \S{\mathcal{S}}
\def \F{\mathcal{F}}
\def \B{\mathcal{B}}
\def \~{\tilde}

\def \Lie{{\rm Lie}}
\title{\LARGE \bf
% Controllability of Bilinear Systems by Mapping Lie Brackets to Permutations
% Mapping Lie Brackets to Permutations for Studying Controllability of Bilinear Systems
Analyzing Controllability of Bilinear Systems on Symmetric Groups:
Mapping Lie Brackets to Permutations
% 
% Control of Ensemble Systems on Special Orthogonal Groups
% Ensemble Control of Systems on Special Orthogonal Groups
}

%\author{ \parbox{3 in}{\centering Huibert Kwakernaak*
%         \thanks{*Use the $\backslash$thanks command to put information here}\\
%         Faculty of Electrical Engineering, Mathematics and Computer Science\\
%         University of Twente\\
%         7500 AE Enschede, The Netherlands\\
%         {\tt\small h.kwakernaak@autsubmit.com}}
%         \hspace*{ 0.5 in}
%         \parbox{3 in}{ \centering Pradeep Misra**
%         \thanks{**The footnote marks may be inserted manually}\\
%        Department of Electrical Engineering \\
%         Wright State University\\
%         Dayton, OH 45435, USA\\
%         {\tt\small pmisra@cs.wright.edu}}
%}

\author{Wei Zhang, ~\IEEEmembership{Student Member,~IEEE,} and Jr-Shin~Li,~\IEEEmembership{Senior Member,~IEEE,}
% \author{Wei Zhang and Jr-Shin Li% <-this % stops a space
\thanks{*This work was supported in part by the National Science Foundation under the awards CMMI-1462796 and ECCS-1509342, and by the Air Force Office of Scientific Research under the award FA9550-17-1-0166.}% <-this % stops a space
\thanks{W. Zhang is with the Department of Electrical and Systems Engineering, Washington University,
		St. Louis, MO 63130, USA
        {\tt\small wei.zhang@wustl.edu}}%
\thanks{J.-S. Li is with the Department of Electrical and Systems Engineering, Washington University,
        St. Louis, MO 63130, USA
        {\tt\small jsli@wustl.edu}}%
}

\begin{document}

\maketitle
% \thispagestyle{empty}
% \pagestyle{empty}

%%%%%%%%%%%%%%%%%%%%%%%%%%%%%%%%%%%%%%%%%%%%%%%%%%%%%%%%%%%%%%%%%%%%%%%%%%%%%%%%
\begin{abstract}
	Bilinear systems emerge in a wide variety of fields as natural models for dynamical systems ranging from robotics to quantum dots. Analyzing controllability of such systems is of fundamental and practical importance, for example, for the design of optimal control laws, stabilization of unstable systems, and minimal realization of input-output relations. Tools from Lie theory have been adopted to establish controllability conditions for bilinear systems, and the most notable development was the Lie algebra rank condition (LARC). However, the application of the LARC may be computationally expensive for high-dimensional systems. In this paper, we present an alternative and effective algebraic approach to investigate controllability of bilinear systems. The central idea is to map Lie bracket operations of the vector fields governing the system dynamics to permutation multiplications on a symmetric group, so that controllability and controllable submanifolds can be characterized by permutation cycles. The method is further applicable to characterize controllability of systems defined on undirected graphs, such as multi-agent systems with controlled couplings between agents and Markov chains with tunable transition rates between states, which in turn reveals a graph representation of controllability through the graph connectivity.
	% and, consequently, reveals a representation of controllability through connected graphs.
\end{abstract}

%%%%%%%%%%%%%%%%%%%%%%%%%%%%%%%%%%%%%%%%%%%%%%%%%%%%%%%%%%%%%%%%%%%%%%%%%%%%%%%%
\section{Introduction}
The impact of differential geometry %, and in particular Lie theory, 
in the context of nonlinear control became salient in the early 1970s \cite{Brockett14}. It was driven by the need of extending linear control theory into a nonlinear setting, and Lie theory, in particular, provided a suitable and powerful toolkit on this account. 
% tools from Lie theory, in particular, provided \cb{a remarkably nice fit} on this account.
Brockett, Jurdjevic, Sussmann and other pioneers introduced the theory of Lie groups and their associated Lie algebras to the domain of nonlinear control to interpret %express 
notions, such as controllability, reachability, observability, and realization, for %right-invariant 
nonlinear control systems \cite{Brockett72, Jurdjevic72, Hirschorn75, Brockett76, Hermann77, Baillieul81, Jakubczyk90, Manikonda00}. Their seminal works have led to the development of necessary and/or sufficient conditions for the characterization of these fundamental properties, and inspired many systematic and influential approaches to the design of control laws that steer and stabilize nonlinear control systems \cite{Isidori81, Krener83, Sastry89, Isidor90, Bloch92, Sontag1995, Persis01} and to the understanding of sufficient statistics in nonlinear filtering and various aspects in quantum control \cite{Huang83,Khaneja02,Li_PRA06}. 

One prominent application of differential geometric techniques in control theory has been to the %in the examination of 
controllability analysis of nonlinear systems, and the most notable development is, unarguably, the Lie algebra rank condition (LARC) \cite{Brockett72, Jurdjevic96}. Examining controllability using the LARC requires the computation of iterative Lie brackets of the drift and control vector fields and then the verification of linear independence among all of the resulting vector fields. This process is computationally expensive for high-dimensional systems. In this paper, we present a new, alternative framework to investigate controllability of right-invariant bilinear systems. In particular, we study the control systems %\cred{on compact smooth manifolds whose smooth}
governed by the vector fields that form a magma structure $\mathfrak{h}=(M,[\cdot\ ,\ \cdot])$ under the Lie bracket operation, namely, $[f,g]\in M$ for any $f,g\in M$. Note $M$ here is the set consisting of the basis elements of Lie$(M)$, the Lie algebra generated by $M$. This structure represents broad classes of control systems, such as systems evolving on compact Lie groups \cite{Jurdjevic96,Sachkov2009} and multi-agent systems governed by reciprocal interaction laws \cite{Chen2015}. We develop an effective algebraic approach to analyze controllability of such bilinear systems. The central idea is to map Lie bracket operations of the vector fields governing the system dynamics to permutation multiplications on a symmetric group, so that controllability and controllable submanifolds can be characterized by permutation cycles. The method can be further adopted to characterize controllability of systems defined on undirected graphs, such as multi-agent systems with controlled couplings between agents \cite{Chen14}, Markov chains with tunable transition rates between states \cite{Brockett08}, and quantum networks \cite{Khaneja02}. Moreover, this new framework reveals a graph representation of controllability through the graph connectivity. %connected graphs.

The paper is organized as follows. In Section \ref{sec:SO(n)_controllability}, we introduce the proposed algebraic approach via the study of controllability for the system defined on the Lie group SO($n$), from which we illuminate the idea of mapping Lie brackets on SO($n$) to permutations on $S_n$, the symmetric group of order $n$. In particular, we construct an algebraic necessary and sufficient controllability condition for the system on SO($n$) in terms of the length of permutation cycles. In Section \ref{sec:*}, we further define a monoid structure on $S_n$, which induces an equivalence relation that is used to explicitly characterize controllability based on the entire set of control vector fields and the controllable submanifold of the system on SO$(n)$. %\cb{based on the available control vector fields.} 
Finally, in Section \ref{sec:graph}, we extend the scope of this novel framework to study the systems defined on undirected graphs, including multi-agent systems and stochastic systems described %characterized 
by Markov chains. %\cred{(might want to say we convert the LARC to permutations...)}

%%%%%%%%%%%%%%%%%%%%%%%%%%%%%%%%%%%%%%%%%%%%%%%%%%%%%%%%%%%%%%%%%%%%%
\section{Interpreting Controllability of Systems on SO$(n)$ over Symmetric Groups}
% \section{Controllability Characterization of Systems on SO$(n)$ \cb{on} Symmetric Groups}
\label{sec:SO(n)_controllability}
In this section, we introduce a new algebraic framework for analyzing controllability of bilinear systems governed by the vector fields that form a magma structure. We begin with presenting our method through the study of controllability for the system defined on SO$(n)$, through which the idea of mapping Lie brackets into permutation multiplications is enlightened. We briefly review the classical controllability results characterized by the LARC for control systems on compact, connected Lie groups. Motivating examples are provided to illustrate the inefficiency and complexity of using the LARC for examining controllability. We then review some essential tools of the symmetric group theory and construct a necessary and sufficient controllability condition for systems defined on SO$(n)$. In particular, we establish a correspondence between the Lie bracket operations on the Lie algebra $\mathfrak{so}(n)$ and permutation multiplications on the symmetric group $S_n$. This gives rise to an explicit condition for effective examination of controllability in terms of the length of permutation cycles.

% In this section, we introduce a new algebraic framework for analyzing controllability of bilinear systems governed by the vector fields that form a magma structure. We begin with presenting our method through the study of controllability for the system defined on SO$(n)$, through which the idea of mapping Lie brackets into permutation multiplications is enlightened. \cb{We briefly review the classical controllability results characterized by the LARC for control systems on compact, connected Lie groups and also recap some essential tools of the symmetric group theory. Then, we construct a necessary and sufficient controllability condition for systems defined on SO$(n)$. In particular, we establish a correspondence between the Lie bracket operations on the Lie algebra $\mathfrak{so}(n)$ and permutation multiplications on the symmetric group $S_n$. This gives rise to an explicit condition for effective examination of controllability in terms of the length of permutation cycles. In addition, examples are provided to demonstrate the inefficiency and complexity of the utilization of LARC as well as motivate and illustrate the main idea of the novel framework.
% }

% ====================================================
\subsection{Controllability of Systems on Compact Lie groups}
Controllability of a system evolving on a compact, connected Lie group has been extensively studied \cite{Brockett72, Jurdjevic72, Jurdjevic96, Sastry99}. The central idea lies in the investigation of the equivalence between the Lie algebra generated by the control (and the drift) vector fields and the underlying Lie algebra associated with the Lie group. Let's consider the time-invariant bilinear control system defined on a compact, connected Lie group $G$ of the form,
\begin{align}
	\label{eq:general_form}
	\dot{X}(t)=AX(t)+\left[\sum_{i=1}^mu_i(t)B_i\right]X(t),\quad X(0)=I,
\end{align}
where $X(t)\in G$ denotes the state, $A,B_1,\dots,B_m$ are %linearly independent 
elements in the Lie algebra $\mathfrak{g}$ of $G$, $I$ is the identity element of the Lie group $G$, and $u_i(t)\in\mathbb{R}$ are piecewise constant control functions for $i=1,\ldots,m$. %In addition, 
We denote the Lie algebra generated by the set $\{A,B_1,\dots,B_m\}$ as Lie$\{A,B_1,\dots,B_m\}$. This is the smallest linear subspace of $\mathfrak{g}$ containing $\{A,B_1,\dots,B_m\}$, which is closed under the Lie bracket operation, i.e., $[C,D]=CD-DC$ for all $C,D\in\mathfrak{g}$.

% =================== Theorem 1 ================
\begin{theorem}
	\label{thm:lie_rank}
	The system in \eqref{eq:general_form} is controllable on the %compact, connected 
Lie group $G$ if and only if $\Lie(\mathcal{F})=\mathfrak{g}$, where $\mathcal{F}=\{A,B_1,\dots,B_m\}$. %\cred{is the drift and control vector fields of the system in \eqref{eq:general_form} evaluated at the identity element $I$. (is this part necessary?)}
\end{theorem}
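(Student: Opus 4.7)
The plan is to prove the two implications separately, with the backward direction being largely geometric (via the Orbit Theorem) and the forward direction requiring a delicate use of compactness to neutralize the drift $A$, which cannot be reversed in time under the dynamics \eqref{eq:general_form}.

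For necessity, I would invoke the Orbit Theorem of Nagano and Sussmann together with the fact that the dynamics are generated by right-invariant vector fields on $G$. The reachable set $\mathcal{R}(I)$ from the identity is contained in the orbit through $I$, which is a connected, immersed Lie subgroup of $G$ whose Lie algebra is precisely $\Lie(\mathcal{F})$. If $\Lie(\mathcal{F})$ were a proper subalgebra of $\mathfrak{g}$, then this orbit would be a proper immersed subgroup of $G$, preventing $\mathcal{R}(I)=G$ and contradicting controllability.

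For sufficiency, assume $\Lie(\mathcal{F})=\mathfrak{g}$. First, by the accessibility rank condition of Sussmann--Jurdjevic, $\mathcal{R}(I)$ has nonempty interior in $G$. The central obstacle is that the drift $A$ cannot a priori be flowed backward, so accessibility alone does not yield controllability. I would overcome this via the compactness of $G$: the closure of the one-parameter subgroup $\{e^{tA}:t\ge 0\}$ is a compact abelian Lie subgroup of $G$, so the flow $e^{tA}$ is Poisson stable, meaning that for every $\varepsilon>0$ and every $\tau>0$ there exists $T>\tau$ with $e^{TA}$ within $\varepsilon$ of $I$. Consequently, $e^{-\tau A}$ lies in the closure of the forward reachable semigroup generated by $A$ alone, and the same reasoning, applied after suitable concatenations of pieces of trajectories, shows that the forward reachable semigroup from $I$ is in fact a subgroup of $G$. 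Combining this drift-reversibility with the open-interior property from accessibility and right-invariance, a standard argument (an open subgroup of a connected topological group is the whole group) delivers $\mathcal{R}(I)=G$.

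The hardest step will be the rigorous drift-reversal argument: one must pass from closure statements to exact equalities for the reachable set, typically by showing that $\mathcal{R}(I)$ is simultaneously open and closed in $G$, or equivalently that an open subsemigroup of a compact connected Lie group is automatically a subgroup. Compactness of $G$ enters essentially here; without it the Poisson-stability trick fails, the drift cannot be recurrently approximated, and the theorem is known to break down (as it does for generic bilinear systems on $\mathbb{R}^n$). Connectedness of $G$ is used at the final step to promote the open-and-closed reachable set to all of $G$.
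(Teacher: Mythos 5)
The paper does not prove this theorem itself but simply defers to the classical references (Brockett 1972; Khaneja 2000, i.e.\ the Jurdjevic--Sussmann line of argument), and your proposal is exactly the standard proof contained there: the Orbit Theorem for necessity, and, for sufficiency, accessibility combined with Poisson stability of the drift flow on the compact group $G$ to show that the closure of the attainable semigroup is an open subgroup of the connected group $G$, hence all of $G$. Your outline is correct, and you rightly flag the one delicate step --- upgrading the closure statement to exact reachability, which is handled by the semigroup property of $\mathcal{R}(I)$ together with the density of its interior --- so there is nothing to add beyond noting that your argument coincides with the cited classical one.
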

\begin{proof}
	See \cite{Brockett72, Khaneja00}.
\end{proof}

% -------------------------------------------------------------
\subsubsection{Basics of the Lie Algebra $\mathfrak{so}(n)$}
\label{sec:basics}
Let $E_{ij}\in\mathbb{R}^{n\times n}$ denote the matrix whose $ij^{\rm th}$ entry is $1$ and the others are $0$, and let $\O_{ij}=E_{ij}-E_{ji}$, then 
\begin{align*}
\O_{ij}=
\begin{cases}
-\O_{ji},\ \text{if\ } i\neq j,\\
0, \qquad \text{if\ } i=j,
\end{cases}
\end{align*}
for all $i,j=1,\dots,n$. The set $\mathcal{B}=\{\O_{ij}:1\leq i<j\leq n\}$ forms a standard basis of $\mathfrak{so}(n)$, which has the dimension $n(n-1)/2$. For simplicity, we will adopt the following notations throughout this paper: %\cred{and terminologies}: 
\begin{enumerate}
	\item[$\bullet$] $\B$: the standard basis of $\mathfrak{so}(n)$; %} where $n\in\mathbb{Z}^+$ and \cb{$n\geq3$};
	\item[$\bullet$] $\F$: the set of the control vector fields of a given system on SO$(n)$ and $\F\subseteq\B$.
	% \item[$\bullet$] $m$: the number of control vector fields, i.e., $m=|\F|$, the cardinality of $\F$
	% \item[$\bullet$] \cb{$M$: a submanifold of SO$(n)$;
	% \item[$\bullet$] $TM$: the tangent bundle of $M$;
	% \item[$\bullet$] $T_XM$: the tangent space of $M$ at the point $X\in M$;
	% \item[$\bullet$] $\Delta$: a distribution on $M$ and $\Delta\subseteq TM$;
	%\item[$\bullet$] $\Delta_X$: the distribution $\Delta$ evaluated at the point $X\in M$. %and $\Delta_X\subseteq T_XM$.
\end{enumerate}

We first observe the following Lie bracket relations of the basis elements in $\mathfrak{so}(n)$.

%======================= Lemma 1 ==================
\begin{lemma}
	\label{lem:son}
	The Lie bracket of $\O_{ij}$ and $\O_{kl}$ %between the elements in $\mathcal{B}$ 
	satisfies the relation $[\O_{ij},\O_{kl}]=\d_{jk}\O_{il}+\d_{il}\O_{jk}+\d_{jl}\O_{ki}+\d_{ik}\O_{lj}$, where $\d$ is the Kronecker delta function, i.e.,
	\begin{align*}
		\d_{mn}=\begin{cases} 1 \quad {\rm if\ } m=n, \\ 0 \quad {\rm if\ } m\neq n. \end{cases}
	\end{align*}
\end{lemma}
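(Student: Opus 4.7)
The plan is to prove the identity by a direct expansion of the commutator using the elementary matrix multiplication rule $E_{ab}E_{cd}=\delta_{bc}E_{ad}$, and then to reassemble the resulting sum of $E_{ab}$'s into the four $\Omega_{ab}$'s on the right-hand side using the antisymmetry relation $\Omega_{ab}=-\Omega_{ba}$ established in the statement.

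First I would write $[\Omega_{ij},\Omega_{kl}]=\Omega_{ij}\Omega_{kl}-\Omega_{kl}\Omega_{ij}$ and expand each product. For instance,
$$\Omega_{ij}\Omega_{kl}=(E_{ij}-E_{ji})(E_{kl}-E_{lk})=\delta_{jk}E_{il}-\delta_{jl}E_{ik}-\delta_{ik}E_{jl}+\delta_{il}E_{jk},$$
and a symmetric expression holds for $\Omega_{kl}\Omega_{ij}$ obtained by swapping the index pairs $(i,j)\leftrightarrow(k,l)$. This yields the commutator as an alternating sum of eight terms, each of the form $\pm\delta_{\star\star}E_{\star\star}$.

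The second step is the bookkeeping: I would pair up the eight resulting terms so that each pair has a common Kronecker delta but carries the two orderings of its remaining index pair, which then combine via $E_{ab}-E_{ba}=\Omega_{ab}$ into a single $\Omega$. For example, the terms containing $\delta_{jk}$ come from $+\delta_{jk}E_{il}$ in $\Omega_{ij}\Omega_{kl}$ and, upon the $(i,j)\leftrightarrow(k,l)$ swap, from $-\delta_{li}E_{kj}$ etc.; after relabeling with the same delta, each such pair assembles into $\delta_{jk}\Omega_{il}$, and analogously one obtains $\delta_{il}\Omega_{jk}$, $\delta_{jl}\Omega_{ki}$, and $\delta_{ik}\Omega_{lj}$.

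The main obstacle, such as it is, is purely clerical: ensuring that the signs coming from the two minus signs inside each $\Omega$, together with the overall minus in $-\Omega_{kl}\Omega_{ij}$, distribute correctly, and that the pair of $E$'s left after fixing each delta is indeed oriented so that their difference matches the sign in the claimed formula (in particular, verifying the orderings $\Omega_{ki}$ and $\Omega_{lj}$ rather than $\Omega_{ik}$ and $\Omega_{jl}$). Once the sign chart is done carefully, the identity falls out immediately; no deeper Lie-theoretic input is needed, since this is simply the structure constant computation for the standard basis of $\mathfrak{so}(n)$.
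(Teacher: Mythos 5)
Your proposal is correct and follows essentially the same route as the paper: the paper likewise starts from $E_{ab}E_{cd}=\delta_{bc}E_{ad}$, expands $[\Omega_{ij},\Omega_{kl}]=[E_{ij}-E_{ji},E_{kl}-E_{lk}]$ into eight $\pm\delta_{\star\star}E_{\star\star}$ terms (via bilinearity of the bracket rather than multiplying the $\Omega$'s out first, an immaterial difference), and regroups them into the four $\Omega$'s exactly as you describe.
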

\begin{proof}
	Notice that $E_{ij}E_{kl}=\d_{jk}E_{il}$, and hence $[E_{ij},E_{kl}]=\delta_{jk}E_{il}-\delta_{li}E_{kj}$. Following the bilinearity of the Lie bracket, we get
\begin{align*}
	[\O_{ij},\O_{kl}]&=[E_{ij}-E_{ji},E_{kl}-E_{lk}]\\
	&= [E_{ij},E_{kl}]-[E_{ij},E_{lk}]-[E_{ji},E_{kl}]+[E_{ji},E_{lk}] \\
	&= \delta_{jk}E_{il}-\delta_{li}E_{kj}-\delta_{jl}E_{ik}+\delta_{ki}E_{lj}\\
	&\quad -\delta_{ik}E_{jl}+\delta_{lj}E_{ki}+\delta_{il}E_{jk}-\delta_{kj}E_{li}\\
	&= \delta_{jk}\O_{il}+\delta_{il}\O_{jk}+\delta_{jl}\O_{ki}+\delta_{ik}\O_{lj}. 
\end{align*}
It follows that for any $\O_{ij},\O_{kl}\in\mathcal{B}$, the Lie bracket $[\O_{ij},\O_{kl}]\neq 0$ if and only if $i=k$, $i=l$, $j=k$, or $j=l$.
\end{proof}
\noindent

Because SO$(n)$ is compact and connected, Theorem \ref{thm:lie_rank} can be applied to check controllability of a system defined on SO($n$). However, the examination of the LARC requires repeated Lie bracket operations. %, which can be computationally expensive. 
The inefficiency and complexity of this procedure is illustrated by the following examples.

% -------------------------------------------------------------
\subsubsection{Complexity of the Application of LARC} % Implementation}
\label{sec:LARC_examples}

% ======================== Example 1 ========================
\begin{example} \rm
\label{ex:so(5)}
Consider the system evolving on SO$(5)$, given by
\begin{align}
	\label{eq:ex_so(5)}
	\dot{X}(t)=\sum_{i=1}^4u_i(t)\O_{i,i+1}X(t),\quad X(0)=I,
\end{align}
% The Lie algebra $\mathfrak{so}(5)$ of SO$(5)$ is a ten-dimensional real vector space.
where $\mathcal{F}=\{\O_{i,i+1}: i=1,\ldots,4\}$ is the set of control vector fields evaluated at the identity matrix $I$. Then, Lie$(\mathcal{F})$ is a Lie subalgebra of $\mathfrak{so}(5)$. Because $\mathfrak{so}(5)$ is a ten-dimensional real vector space and $\mathcal{F}$ contains four linearly independent elements of $\mathcal{B}$, the system in \eqref{eq:ex_so(5)} is controllable if the rest of the six basis elements of $\mathfrak{so}(5)$ can be generated by iterated Lie brackets of elements in $\F$. By applying the Lie bracket properties in Lemma \ref{lem:son} repeatedly, we obtain
% The Lie algebra $\mathfrak{so}(5)$ of SO$(5)$ is a ten-dimensional real vector space. Let $\mathcal{F}=\{\O_{i,i+1}: i=1,\ldots,4\}$ %1\leq i\leq4\}$
% denote the set of control vector fields evaluated at the identity matrix $I$, then Lie$(\mathcal{F})$ is a Lie subalgebra of $\mathfrak{so}(5)$. %In order to check the controllability of the system in \eqref{eq:ex_so(5)}, it suffices to compute the dimension of Lie$(\mathcal{F})$.
% Because the set $\mathcal{F}$ contains four linearly independent elements of $\mathcal{B}$, the system in \eqref{eq:ex_so(5)} is controllable if the rest of the six basis elements of $\mathfrak{so}(5)$ can be generated by iterated Lie bracketing elements in $\F$. By applying the Lie bracket properties in Lemma \ref{lem:son} repeatedly, we obtain
\begin{align*}
&[\O_{12},\O_{23}]=\O_{13},\ [\O_{23},\O_{34}]=\O_{24},\ [\O_{34},\O_{45}]=\O_{35},\\
&[\O_{12},\O_{24}]=[\O_{12},[\O_{23},\O_{34}]]=\O_{14},\\
&[\O_{23},\O_{35}]=[\O_{23},[\O_{34},\O_{45}]]=\O_{25},\\
&[\O_{12},\O_{25}]=[\O_{12},[\O_{13},[\O_{34},\O_{45}]]]=\O_{15}.
\end{align*}
% $\O_{13},\O_{14},\O_{15},\O_{24},\O_{25},\O_{35}$ are in $\mathcal{B}\backslash\mathcal{F}$, they are linearly independent of one another and of every element in $\mathcal{F}$. %and are also linearly independent with every element in $\mathcal{F}$.
% Therefore, the system in \eqref{eq:ex_so(5)} is controllable on SO$(5)$ by Theorem \ref{thm:lie_rank}.
Because $\{\O_{13},\O_{14},\O_{15},\O_{24},\O_{25},\O_{35}\}\cup\F=\B$, the system in \eqref{eq:ex_so(5)} is controllable on SO$(5)$ by Theorem \ref{thm:lie_rank}.

For this low-dimensional system, it requires computations up to triple brackets in order to analyze controllability. In general, it may need a large number of Lie bracketing iterations in order to conclude controllability by using the LARC for systems defined on higher dimensional special orthogonal groups.
\end{example}

% ===================== Example 2 ================
\begin{example} 
\label{ex:so(5)_2}
\rm
Consider the system evolving on SO$(5)$ driven by three controls, %vector fields, 
given by
\begin{align}
	\label{eq:ex_so(5)_2}
	\dot{X}(t) &= \left[u_1(t)\O_{12}+u_2(t)\O_{23}+u_3(t)\O_{45}\right]X(t), \nonumber\\
	X(0) &= I.
\end{align}
In this case, we have $\mathcal{F}=\{\O_{12},\O_{23},\O_{45}\}$. Then, the single, 
\begin{align*}
	&[\O_{12},\O_{23}]=\O_{13}, \\
	&[\O_{12},\O_{45}]=[\O_{23},\O_{45}]=0,
\end{align*}
and double Lie brackets,
\begin{align*}
	&[\O_{13},\O_{12}]=[[\O_{12},\O_{23}],\O_{12}]=\O_{23},\\
	&[\O_{23},\O_{13}]=[\O_{23},[\O_{12},\O_{23}]]=\O_{12},\\
	&[\O_{13},\O_{45}]=[[\O_{12},\O_{23}],\O_{45}]=0,
\end{align*}
result in a closed Lie algebra of dimension 4. Therefore, this system is not controllable. In addition, the controllable submanifold of the system in \eqref{eq:ex_so(5)_2} is the integral manifold corresponding to the involutive distribution, $\Delta={\rm span}\{\O_{12}X,\O_{23}X,\O_{13}X,\O_{45}X\}$, passing through the identity matrix $I$.
\end{example}

%This simple example illustrates the necessity to compute all possible successive Lie brackets in order to inform uncontrollability of the system. In general, if a system on SO($n$) has a drift and $m$ controls with $m\leq n(n-1)/2$, then one needs to compute Lie brackets up to $\sum_{k=1}^{m+1}m(m+1)^k/2=(m+1)[(m+1)^{m+1}-1]/2$ times, and each bracket operation has complexity $O(n^3)$. In addition, applying the LARC also involves the examination of linearly independence, which requires Gaussian elimination with complexity $O(n^3)$ as well. Therefore, the application of LARC is computationally expensive for large $n$ and $m$.

This simple example illustrates the necessity to compute all possible successive Lie brackets in order to inform uncontrollability of the system. In general, if a system on SO($n$) has a drift and $m$ controls with $m\leq n(n-1)/2$, then one needs to compute Lie brackets up to $(m+1)[(m+1)^{m+1}-1]/2$ times. This can be seen by induction on $k$, the order of iterated Lie brackets, as follows. For $k=1$, the number of the first-order Lie brackets is ${m+1\choose 2}$, since every Lie bracket involves two of the $m+1$ vector fields. For $k=2$, the second-order Lie brackets are the Lie brackets of these ${m+1\choose 2}$ first-order vector fields and the $m+1$ vector fields governing the system dynamics, and hence the number of the second-order Lie brackets is ${m+1\choose 2}(m+1)$. By induction, the total number of Lie brackets needed to be computed for the examination of LARC is $\sum_{i=1}^{m+1}{m+1\choose 2}(m+1)^{i-1}=\sum_{i=1}^{m+1}m(m+1)^i/2=(m + 1)[(m + 1)^{m+1}-1]/2$, and each bracket operation has complexity $O(n^3)$. In addition, applying the LARC also involves the examination of linearly independence, which requires Gaussian elimination with complexity $O(n^3)$ as well. Therefore, the application of LARC is computationally expensive for large $n$ and $m$.

In the following, we present a new notion for analyzing controllability of systems on SO($n$) in terms of the length of permutation cycles on the symmetric group $S_n$.

% ===============================================
\subsection{Mapping Lie Bracketing to Permutation Compositions}
To fix the idea, we first consider the driftless system of the form $\dot{X}(t)=\left[\sum_{k=1}^mu_k(t)\Omega_{i_kj_k}\right]X(t)$, $X(0)=I$,
% \begin{align*}
% 	% \label{eq:driftless}
%      \dot{X}(t)=\left[\sum_{k=1}^mu_k(t)\Omega_{i_kj_k}\right]X(t), \quad X(0)=I,
% \end{align*}
where $\O_{i_kj_k}\in\mathcal{F}=\{\O_{i_1j_1},\O_{i_2j_2},\dots,\O_{i_mj_m}\}\subseteq\mathcal{B}$. %are standard basis elements of $\mathfrak{so}(n)$.
We will develop a correspondence between the elements of $\mathfrak{so}(n)$ and $S_n$ as well as a mapping between their operations, namely, Lie brackets on $\mathfrak{so}(n)$ and permutations on $S_n$. This nontrivial identification is the key to our new development of controllability conditions.

Recall that every element $\s\in S_n$ is a permutation on $n$ letters, i.e., a bijective map $\s:Z_n\rightarrow Z_n$, where, conventionally, $Z_n=\{1,\dots,n\}$. In addition, an equivalence relation on $Z_n$ can be defined by $a\sim b$ if and only if $b=\s^k(a)$ for $a,b\in Z_n$ and for some $k\in\mathbb{Z}$. The equivalence classes in $Z_n$ determined by this equivalence relation are called the orbits of $\s$. A permutation $\s\in S_n$ is a cycle if it has at most one orbit containing more than one element, and the length of a cycle is the number of elements in its nontrivial orbit. A cycle of length $k$ is also called a $k$-cycle, and, in particular, a 2-cycle is called a transposition. Any permutation of a finite set which contains at least two elements is a product of some transpositions on this set \cite{Lang02}.

Now, let's identify each subset of $\mathcal{B}$ with an element in $S_n$. Let $\mathcal{P}(\mathcal{B})$ denote the power set of $\mathcal{B}$, and define the map 
\begin{equation}
	\label{eq:iota}
	\iota:\mathcal{P}(\mathcal{B})\rightarrow S_n 
\end{equation}
by $\{\O_{i_1j_1},\O_{i_2j_2},\dots,\O_{i_lj_l}\}\mapsto(i_l,j_l)\cdots (i_2,j_2)(i_1,j_1)$, where $(i_k,j_k)$, $k=1,\dots,m$, is the cyclic notation of the following permutation,
\begin{align*}
	\left(\begin{array}{ccccccc} 1 & \cdots & i_k & \cdots & j_k & \cdots & n \\ 1 & \cdots & j_k & \cdots & i_k & \cdots & n \end{array}\right).
\end{align*}
We note that $\iota$ is a map that is not a well-defined function.

% ========================= Lemma 2 ========================
\begin{lemma}
	\label{lem:iota}
	The map $\iota:\mathcal{P}(\mathcal{B})\rightarrow S_n$ is surjective.
\end{lemma}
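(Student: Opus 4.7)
The plan is to invoke the fact recalled just above the statement---that every permutation of a finite set with at least two elements is a product of transpositions---and to strengthen it so that the transposition factors can be chosen pairwise distinct. Once this is in hand, each such factorization of a target $\sigma\in S_n$ immediately reads off a subset of $\mathcal{B}$ mapping to $\sigma$ under $\iota$.

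Concretely, given $\sigma\in S_n$, I would first decompose $\sigma$ into disjoint cycles $\sigma=c_1c_2\cdots c_s$, and then expand each nontrivial $k$-cycle via the standard ``fan'' identity
\[
(a_1,a_2,\ldots,a_k)\;=\;(a_1,a_k)(a_1,a_{k-1})\cdots(a_1,a_2),
\]
which writes the cycle as a composition of $k-1$ pairwise distinct transpositions. Concatenating these factorizations across the disjoint cycles produces a product $\sigma=\tau_l\tau_{l-1}\cdots\tau_1$ in which all transpositions are distinct, because the supports of the $c_r$'s are disjoint subsets of $\{1,\ldots,n\}$. Writing each $\tau_r=(i_r,j_r)$ with $i_r<j_r$, I would then set $S=\{\Omega_{i_1j_1},\Omega_{i_2j_2},\ldots,\Omega_{i_lj_l}\}\subseteq\mathcal{B}$; this is a genuine subset with no repeated elements, and, enumerated in the order listed, it satisfies $\iota(S)=(i_l,j_l)\cdots(i_1,j_1)=\sigma$ by the very definition of $\iota$. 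The identity permutation is handled separately by taking $\iota(\emptyset)=e$ under the usual convention that the empty product of transpositions equals the identity of $S_n$.

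The main point requiring care, which I would flag explicitly, is the pairwise distinctness of the transposition factors: because $S$ must be a set rather than a multiset, the factorization cannot repeat transpositions. The fan identity secures distinctness within a single cycle (the ``hub'' $a_1$ is paired with $k-1$ different spokes), and disjointness of supports secures it across cycles, so this obstacle is mild once one commits to the disjoint-cycle viewpoint. The authors' earlier remark that $\iota$ is multi-valued---different enumerations of the same subset can yield different permutations---is actually convenient here: surjectivity only requires that some enumeration of some subset realize each $\sigma$, which the construction above supplies.
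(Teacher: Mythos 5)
Your proof is correct and takes essentially the same route as the paper's one-line argument: every permutation factors as a product of transpositions, and any such factorization reads off a subset of $\mathcal{B}$ whose image under $\iota$ is the target permutation. The only substantive addition is your explicit check, via the disjoint-cycle decomposition and the fan identity, that the transposition factors can be taken pairwise distinct --- a detail the paper leaves implicit but which is genuinely needed since $\mathcal{S}$ must be a set rather than a multiset.
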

\begin{proof}
	Because any permutation can be expressed as a product of transpositions, then for any $\s=(i_l,j_l)\cdots(i_1,j_1)\in S_n$, there exists a subset $\mathcal{S}=\{\O_{i_1j_1},\dots,\O_{i_lj_l}\}\subseteq\B$ such that $\iota(\mathcal{S})=\s$.
\end{proof}

%========================= Remark 1 ========================
\begin{remark}[\textbf{The notion of bridging index}]
	\rm Lemma \ref{lem:iota} reveals that every permutation can be associated with a subset of $\B$ through the map $\iota$, which allows us to characterize the relationship between the Lie bracket operation on $\mathfrak{so}(n)$ and the permutation operation on $S_n$ under $\iota$. Consider $\S=\{\O_{ij},\O_{kl}\}$, then we have $[\O_{ij},\O_{kl}]=\O_{il}$ if $j=k$, where $\O_{il}$ is distinct from and linearly independent of the elements in $\S$. %In this case, \cred{we say that $\O_{il}$ is a non-repeating element}. 
Applying $\iota$ to the set $\S$ gives $\iota(\S)=\iota(\O_{kl})\iota(\O_{ij})=\iota(\O_{jl})\iota(\O_{ij})=(j,l)(i,j)=(i,j,l)$, which is a cycle of length 3. The increase of the cycle length by 1 (transpositions have length 2) is due to %the existence of 
the bridging index $j=k$. On the other hand, if $[\O_{ij},\O_{kl}]=0$, then there are two cases: (i) $i=k$ and $j=l$, so that $(i,j)(k,l)=e$, where $e\in{S_n}$ is the identity map on $Z_n$; and (ii) $i,j,k,l$ are all distinct, then $(i,j)(k,l)=(k,l)(i,j)$ is a permutation composed of the product of two disjoint transpositions. Note that case (i) and (ii) represent the commutativity property of the group actions over $S_n$, which corresponds to the vanishing of Lie brackets on $\mathfrak{so}(n)$; whereas nonvanishing of Lie brackets leads to the increase of the cycle length. 

Inductively, for $\iota(\S)=(i,j,l)$ with its index set denoted $J=\{i,j,l\}$ and for some $\O_{ab}\in\B$, we have $[\O_{ab},\O_{pq}]=0$ if $a,b\not\in J$ and $p,q\in J$, and in this case, $\iota(\O_{ab})\iota(\mathcal{S})=(a,b)(i,j,l)$ is a permutation as a product of two disjoint cycles. However, if either $a\in J$ or $b\in J$, then we have $[\O_{ab},\O_{pq}]\in\B\backslash\S$ for any $p,q\in J$; also $\iota(\O_{ab})\iota(\mathcal{S})$ must be a 4-cycle with the nontrivial orbit $\{i,j,l,a\}$ assuming $b\in J$ which serves as the bridging index.
\end{remark}

Now, we revisit Examples \ref{ex:so(5)} and \ref{ex:so(5)_2} to illustrate %explicitly describe 
the application of $\iota$ for mapping Lie brackets to permutations. Meanwhile, we use these examples to motivate the idea of interpreting controllability in terms of the length of permutation cycles. %development of controllability conditions in terms of the length of permutation cycles.

% ===================== Example 3 ====================
\begin{example} \rm
	\label{ex:so(5)_i}
	Recall the system in \eqref{eq:ex_so(5)} in Example \ref{ex:so(5)} with the set of control vector fields $\mathcal{F}_1=\{\O_{i,i+1}:1\leq i\leq4\}$. %, whose control vector fields evaluated at the identity matrix $I$ is denoted $\mathcal{F}_1=\{\O_{i,i+1}:1\leq i\leq4\}$. 
	Applying the map $\iota$, the successive Lie brackets %calculated in Example \ref{ex:so(5)} 
are mapped to permutation products,% as follows:
\begin{align*}
	[\O_{12},\O_{23}]=\O_{13} &\mapsto (1,2)(2,3)=(1,2,3),\\
	[\O_{23},\O_{34}]=\O_{24} &\mapsto (2,3)(3,4)=(2,3,4),\\
	[\O_{34},\O_{45}]=\O_{35} &\mapsto (3,4)(4,5)=(3,4,5),\\
	[\O_{12},[\O_{23},\O_{34}]]=\O_{14} &\mapsto (1,2)(2,3,4)=(1,2,3,4),\\
	[\O_{23},[\O_{34},\O_{45}]]=\O_{25} &\mapsto (2,3)(3,4,5)=(2,3,4,5),\\
	[\O_{12},[\O_{23},[\O_{34},\O_{45}]]]=\O_{15} &\mapsto (1,2)(2,3,4,5)\\
	&\quad \ = (1,2,3,4,5).
\end{align*}
At each iteration, the resulting Lie bracket is nontrivial and distinct, and thus %, from Remark \ref{rm:liebracket}, 
the corresponding permutation is a cycle with increased length as shown above. %in this example. 
In addition, $\iota(\mathcal{F}_1)=(1,2)(2,3)(3,4)(4,5)=(1,2,3,4,5)$ %$\iota(\mathcal{F}_1)$ 
is a cycle of length 5, that is, the cycle of maximum length in $S_5$. This suggests that controllability of a system defined on the special orthogonal Lie group may be determined by the length of permutation cycles on the associated symmetric group, because the system in \eqref{eq:ex_so(5)} was shown to be controllable in Example \ref{ex:so(5)}.

This conjecture can be further verified using Example \ref{ex:so(5)_2}, where the system in \eqref{eq:ex_so(5)_2} is not controllable. In this case, the given control vector fields %evaluated at the identity matrix $I$ is 
are denoted by $\mathcal{F}_2=\{\O_{12},\O_{23},\O_{45}\}$.  Because a 5-cycle can be decomposed as a product of at least 4 transpositions, $\iota(\mathcal{F}_2)$ cannot be a 5-cycle. This suggests that the system is not controllable. In fact, $\iota(\mathcal{F}_2)=(1,2)(2,3)(4,5)=(1,2,3)(4,5)$ is a permutation composed of a product of two disjoint cycles with nontrivial orbits $\{1,2,3\}$ and $\{4,5\}$, respectively. Note the reason that $\iota(\mathcal{F}_2)$ is not a cycle of length 5 is due to the lack of bridging transpositions that transport an element in the orbit $\{1,2,3\}$ to an element in the orbit $\{4,5\}$, which results in the uncontrollability of the system in \eqref{eq:ex_so(5)_2}. 
\end{example}

Next, we will rigorously demonstrate the use of the length of permutation cycles for characterizing controllability of the system defined on SO$(n)$ for $n\geq 3$.

% ======================================================
\subsection{Controllability in terms of Length of Permutation Cycles}
% \subsection{Examining Controllability by the Length of Permutation Cycles}
Example \ref{ex:so(5)_i} sheds light on determining controllability for systems on SO$(n)$ in terms of the length of permutation cycles on $S_n$. In this section, we prove this nontrivial observation.

%========================= Theorem 2 ========================
\begin{theorem}
	\label{thm:son}
	The control system defined on {\rm SO}$(n)$ of the form
	\begin{align}
		\label{eq:son}
	     \dot{X}(t)=\left[\sum_{k=1}^mu_k(t)\Omega_{i_kj_k}\right]X(t), \quad X(0)=I,
	\end{align}
	where $\O_{i_kj_k}\in\F=\{\O_{i_1j_1},\dots,\O_{i_mj_m}\}\subseteq\B$, with $1\leq i_k<j_k\leq n$ for $k=1,\dots,m$, are elements of the standard basis of $\mathfrak{so}(n)$, is controllable if and only if there is a subset $\mathcal{S}\subseteq\F$ such that $\iota(\mathcal{S})$ is an $n$-cycle, where $\iota$ is the map defined in \eqref{eq:iota}.
\end{theorem}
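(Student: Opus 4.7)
The plan is to invoke the LARC (Theorem~\ref{thm:lie_rank}), which reduces controllability of \eqref{eq:son} to $\Lie(\F)=\mathfrak{so}(n)$, and then to match this algebraic condition with the permutation-cycle condition through an undirected \emph{index graph} $G_\F$ on the vertex set $\{1,\ldots,n\}$ whose edges are the pairs $\{i_k,j_k\}$ for $\O_{i_kj_k}\in\F$. The linking observation, implicit in the bridging-index remark following Lemma~\ref{lem:iota}, is that the bracket formula in Lemma~\ref{lem:son} lets indices propagate exactly along edges of $G_\F$, while at the level of $S_n$ a product of transpositions is an $n$-cycle precisely when the corresponding edges form a connected spanning subgraph of $\{1,\ldots,n\}$.

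For sufficiency, suppose an ordering of a subset $\mathcal{S}\subseteq\F$ makes $\iota(\mathcal{S})$ an $n$-cycle. An $n$-cycle has a single orbit of length $n$, so every index in $\{1,\ldots,n\}$ appears in some pair of $\mathcal{S}$, and the subgraph $G_\mathcal{S}$ must be connected, since otherwise the transposition product would permute each component separately and fail to have a single $n$-orbit. For any $1\le p<q\le n$, pick a simple path $p=v_0,v_1,\ldots,v_r=q$ in $G_\mathcal{S}$ and iterate the identity $[\O_{ab},\O_{bc}]=\O_{ac}$, valid when $a,b,c$ are distinct by Lemma~\ref{lem:son}, along the path: $\O_{v_0v_2}=[\O_{v_0v_1},\O_{v_1v_2}]$, then $\O_{v_0v_3}=[\O_{v_0v_2},\O_{v_2v_3}]$, and so on, so that $\pm\O_{pq}=\O_{v_0v_r}\in\Lie(\mathcal{S})\subseteq\Lie(\F)$. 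Hence $\B\subseteq\Lie(\F)$, so $\Lie(\F)=\mathfrak{so}(n)$ and Theorem~\ref{thm:lie_rank} delivers controllability.

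For necessity, assume $\Lie(\F)=\mathfrak{so}(n)$. By Lemma~\ref{lem:son} the index set of any iterated bracket of elements of $\F$ is contained in the union of the index sets of its inputs; thus any vertex $v$ missing from $G_\F$ would preclude $\O_{vw}\in\Lie(\F)$, so $G_\F$ must span $\{1,\ldots,n\}$. Similarly, if $G_\F$ split into components $C_1,\ldots,C_p$ with $p\ge 2$, every iterated bracket would lie in $\bigoplus_{s=1}^{p}\mathfrak{so}(C_s)$, a proper subalgebra of $\mathfrak{so}(n)$; hence $G_\F$ is connected. Choose any spanning tree $T\subseteq G_\F$, let $\mathcal{S}\subseteq\F$ consist of the $n-1$ basis elements indexed by the edges of $T$, and order its transpositions by leaf-peeling: pick a leaf $v$ of $T$ with unique neighbor $u$, apply induction to $T\setminus\{v\}$ to obtain an $(n-1)$-cycle $\sigma$ on $\{1,\ldots,n\}\setminus\{v\}$, and verify by direct computation that left-multiplying by $(u,v)$ fuses the fixed point $v$ into the orbit of $\sigma$, producing an $n$-cycle. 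The most delicate step I anticipate is precisely this inductive verification, since one must confirm, regardless of where $u$ sits inside $\sigma$, that $(u,v)\sigma$ is a single $n$-cycle rather than a product of shorter disjoint cycles; the sufficiency direction is mainly bookkeeping along a path, though one should still track that consecutive indices in the iterated bracket remain distinct so that no bracket collapses to zero.
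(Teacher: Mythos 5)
Your proof is correct, but it takes a genuinely different route from the paper's. The paper proves sufficiency by induction on $n$: it peels the last transposition off the $n$-cycle, invokes the induction hypothesis to get controllability on ${\rm SO}(n-1)$, and then uses the bridging index to generate the missing elements $\O_{kn}$. For necessity it passes to a subset $\S$ with ``no redundant elements'' and asserts that $\iota(\S)$ is then an $n$-cycle. You instead work through the index graph $G_\F$: sufficiency becomes a direct (non-inductive) iteration of $[\O_{ab},\O_{bc}]=\O_{ac}$ along simple paths of a connected spanning subgraph, and necessity becomes connectivity of $G_\F$ followed by a spanning-tree selection and a leaf-peeling ordering of the transpositions. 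Your version buys two things: the sufficiency argument is shorter and avoids the induction on $n$ entirely, and the necessity argument is more rigorous than the paper's, since the spanning tree plus leaf-peeling makes explicit exactly the fact that the paper's ``no redundant elements'' step leaves implicit (it is precisely the tree structure, not mere connectivity, that guarantees the product of the $n-1$ transpositions is an $n$-cycle). The paper's route, in exchange, keeps everything phrased in terms of cycles and bridging indices, which feeds directly into the monoid machinery of Section~\ref{sec:*}; your graph-theoretic framing instead anticipates the connectivity interpretation of Section~\ref{sec:visualization}. One caveat: your opening ``linking observation'' that a product of transpositions is an $n$-cycle \emph{precisely when} the corresponding edges form a connected spanning subgraph is false as a biconditional --- a connected spanning subgraph with redundant edges can produce a shorter cycle, e.g.\ $(1,3)(2,3)(1,2)=(2,3)$, which is exactly the degeneracy the paper isolates in Example~\ref{ex:redundant} and Remark~\ref{rem:degeneracy}. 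Your actual proof never uses the false direction (you restrict to a spanning tree before ordering), so this is a misstatement in the motivation rather than a gap, but the sentence should be corrected to ``form a spanning tree.''
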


\begin{proof} 
	We know, by the LARC, that the system in \eqref{eq:son} is controllable on SO$(n)$ if and only if ${\rm Lie}(\mathcal{F})=\mathfrak{so}(n)$. Therefore, it is equivalent to showing that ${\rm Lie}(\mathcal{S})=\mathfrak{so}(n)$ if and only if $\iota(\mathcal{S})$ is an $n$-cycle for some $\mathcal{S}\subseteq\mathcal{F}$.
	
% ------------ Sufficiency ----------
	(Sufficiency) Suppose there exists a subset $\S\subseteq\F$ such that $\iota(\mathcal{S})$ is an $n$-cycle. Because an $n$-cycle can be decomposed into a product of at least $n-1$ transpositions, this implies $m\geq n-1$. Hence, it suffices to assume that the cardinality of $\mathcal{S}$ is $n-1$, and, without loss of generality, let $\mathcal{S}=\{\O_{i_1j_1},\dots,\O_{i_{n-1}j_{n-1}}\}$. Because $\iota(\mathcal{S})$ is an $n$-cycle, it follows that the index set $\{i_1,j_1,\dots,i_{n-1},j_{n-1}\}=\{1,\ldots,n\}$. %contains all of the $n$ letters of $S_n$, i.e., $1, \dots,n$. 
	Note that the set $\{i_1,j_1,\dots,i_{n-1},j_{n-1}\}$ may contain repeated elements.
%where the index set $\{i_1,j_1,\dots,i_{n-1},j_{n-1}\}=\{1,\dots,n\}$. 

%\cb{then the index set $\{i_1,j_1,\dots,i_{n-1},j_{n-1}\}$ contains all of the $n$ numbers $1,\dots,n$. Note that the set $\{i_1,j_1,\dots,i_{n-1},j_{n-1}\}$ may contain repeated elements, but the axiom of extensionality in the set theory implies $\{i_1,j_1,\dots,i_{n-1},j_{n-1}\}=\{1,\dots,n\}$. } We will now prove the sufficiency by induction.

($n=3$): Suppose there exists a subset $\S=\{\O_{ij},\O_{kl}\}\subset\F$ and $\iota(\S)=(i,j)(k,l)$ is a 3-cycle, i.e., it must be $i=k$, $j=k$ $i=l$, or $j=l$. Then, we have $[\O_{ij},\O_{kl}]\in\B\backslash\S$. Therefore, ${\rm span}\{\O_{ij},\O_{kl},[\O_{ij},\O_{kl}]\}=\mathfrak{so}(3)$, and the system in \eqref{eq:son} is controllable on SO$(3)$.

Now, assume that a system defined on SO$(n-1)$, $n\geq4$, in the form of \eqref{eq:son} is controllable if there is $\S\subseteq\F$ such that $\iota(\S)$ is an $(n-1)$-cycle. 
Let $\S\subseteq\F$ be a set of $n-1$ elements such that $\iota(\S)=(i_{n-1},j_{n-1})(i_{n-2},j_{n-2})\cdots(i_1,j_1)$ is a cycle of length $n$, then for every $k=1,\dots,n-1$, there exists some $l=1,\dots,n-1$ such that $\{i_k,j_k\}\cap\{i_l,j_l\}\neq\varnothing$. Consequently, there are $n-2$ of the transpositions $(i_k,j_k)$, $k=1,\dots,n-1$, such that their multiplication is a cycle of length $n-1$. Without loss of generality, assume that $\iota(\S\backslash\{\O_{i_{n-1},j_{n-1}}\})=(i_{n-2},j_{n-2})\cdots(i_1,j_1)$ is a $(n-1)$-cycle with the nontrivial orbit $\{i_1,j_1,\dots,i_{n-2},j_{n-2}\}=\{1,\dots,n-1\}$. By the induction hypothesis, the system in \eqref{eq:son} is controllable on SO$(n-1)\subset$ SO$(n)$. Equivalently, any $\O_{ij}\in\B$ such that $1\leq i<j\leq n-1$ can be generated by iterated Lie brackets of the elements in $\S\backslash\{\O_{i_{n-1},j_{n-1}}\}$. Because $\iota(\S)=(i_{n-1},j_{n-1})\iota(\S\backslash\{\O_{i_{n-1},j_{n-1}}\})$ is a $n$-cycle, we must have $i_{n-1}\in\{1,\dots,n-1\}$ and $j_{n-1}=n$. Therefore, $\O_{kn}$ can be generated by the Lie brackets $[\O_{ki_{n-1}},\O_{i_{n-1}j_{n-1}}]$ for any $k=1,\dots,n-1$. Consequently, the system in \eqref{eq:son} is controllable on SO$(n)$.

% ------------ Necessity ----------
(Necessity) Because the system in \eqref{eq:son} is controllable, ${\rm Lie}(\mathcal{F})=\mathfrak{so}(n)$. Then, there exists a subset $\mathcal{S}$ of $\mathcal{F}$ such that ${\rm Lie}(\mathcal{S})=\mathfrak{so}(n)$ and $\S$ contains no \emph{redundant elements}, i.e., the elements that can be generated by Lie brackets of the other elements in $\mathcal{S}$. Without loss of generality, we assume $\mathcal{S}=\{\O_{i_1j_1},\dots,\O_{i_lj_l}\}$, where $l\leq m$. By Lemma \ref{lem:son}, for any $\O_{ab},\O_{cd}\in\S$, if $[\O_{ab},\O_{cd}]\neq0$, then there must exist a bridging index, i.e., must be one of the cases of $a=c$, $a=d$, $b=c$, or $b=d$. This, together with $\Lie(\S)=\mathfrak{so}(n)$, implies that the index set $J$ of $\S$ is $J=\{i_1,j_1,\dots,i_l,j_l\}=\{1,\dots,n\}$, and for any $\O_{i_kj_k}\in\S$, there exists some $\O_{i_sj_s}\in \S$ with $s\neq k$ such that $\{i_k,j_k\}\cap\{i_s,j_s\}\neq\varnothing$. Moreover, because $\S$ does not contain redundant elements, $\iota(\mathcal{S})=\iota(\O_{i_lj_l})\cdots\iota(\O_{i_1j_1})$ is a cycle whose orbit contains every element in $\{1,\dots,n\}$, namely, it is a cycle of length $n$. In addition, the cardinality of $\mathcal{S}$ is $n-1$.
\end{proof}

% ================= Remark 2 ==============
\begin{remark}
	\label{rem:number_of_control}
	\rm Following the proof of Theorem \ref{thm:son}, it requires at least $n-1$ controls for the system on SO$(n)$ as in \eqref{eq:son} to be fully controllable and, on the other hand, for $\iota(\mathcal{S})$, $\S\subseteq\F$, to reach a cycle of length $n$. 
\end{remark}

% ====================== Corollary 1 =====================
\begin{corollary}
	\label{cor:son}
	The controllable submanifold of the system in \eqref{eq:son} is determined by the orbits of $\iota(\mathcal{S})$, where $\mathcal{S}\subseteq\F$ %$=\{\O_{i_1 j_1},\dots,\O_{i_m j_m}\}$ 
	satisfies ${\rm Lie}(\S)={\rm Lie}(\F)$.
\end{corollary}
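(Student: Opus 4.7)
The plan is to use the disjoint-cycle decomposition of $\iota(\S)$ to obtain a block decomposition of $\Lie(\F)$, which in turn identifies the controllable submanifold as a product of lower-dimensional special orthogonal groups embedded block-diagonally in SO$(n)$. Pick $\S\subseteq\F$ with $\Lie(\S)=\Lie(\F)$ and no redundant elements (exactly the kind of set produced in the necessity part of Theorem \ref{thm:son}), and write $\iota(\S)$ as a product of disjoint cycles with nontrivial orbits $O_1,\dots,O_r\subseteq Z_n$. For each $i$, let $\S_i=\{\O_{ab}\in\S:\{a,b\}\subseteq O_i\}$. Since every transposition $(a,b)$ appearing in the product $\iota(\S)$ must have both $a$ and $b$ lying in a common cycle-orbit, the family $\{\S_i\}$ partitions $\S$, and $\iota(\S_i)$ is precisely the $|O_i|$-cycle on $O_i$.

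First I would show that distinct $\S_i$'s commute elementwise. By Lemma \ref{lem:son}, $[\O_{ab},\O_{cd}]=0$ whenever $\{a,b\}\cap\{c,d\}=\varnothing$, which is exactly the situation for generators drawn from different orbits. Hence $\Lie(\S)=\bigoplus_{i=1}^r \Lie(\S_i)$, and moreover no iterated bracket can ever create a new basis element whose index pair crosses two orbits, since producing a nonzero bracket requires a bridging index shared by the two operands. Second, for each $i$ I would apply the sufficiency half of Theorem \ref{thm:son}, restricted to the coordinates in $O_i$ (relabelling $O_i$ as $\{1,\dots,|O_i|\}$): because $\iota(\S_i)$ is an $|O_i|$-cycle, one obtains $\Lie(\S_i)=\mathfrak{so}(O_i)$, where $\mathfrak{so}(O_i)$ denotes the subalgebra of skew-symmetric matrices supported on rows and columns indexed by $O_i$. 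Combining the two observations, $\Lie(\F)=\bigoplus_{i=1}^r \mathfrak{so}(O_i)$.

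Finally, because the system \eqref{eq:son} is right-invariant and driftless, its controllable submanifold through $X(0)=I$ is the connected Lie subgroup of SO$(n)$ whose Lie algebra equals $\Lie(\F)$; the block decomposition above identifies this subgroup with the block-diagonal embedding $\prod_{i=1}^r \mathrm{SO}(O_i)\hookrightarrow\mathrm{SO}(n)$, with each factor rotating the coordinates in its orbit and fixing the remaining coordinates. The main obstacle is the clean separation of indices: one must argue that no sequence of iterated brackets ever manufactures a generator that straddles two distinct orbits, and this is precisely what the bridging-index characterization in Lemma \ref{lem:son} forbids. Once that is in hand, the proof reduces to applying Theorem \ref{thm:son} orbit by orbit.
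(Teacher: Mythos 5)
Your proposal is correct and follows essentially the same route as the paper's proof: partition $\S$ according to the disjoint-cycle decomposition of $\iota(\S)$, use Lemma \ref{lem:son} to see that generators from different orbits commute so that ${\rm Lie}(\S)$ splits as a direct sum, apply Theorem \ref{thm:son} orbit by orbit, and take the integral manifold through $I$. The only (harmless) difference is at the very end, where the paper invokes the Frobenius theorem and identifies the controllable submanifold as a leaf of the resulting foliation, while you identify it directly as the block-diagonal connected Lie subgroup $\prod_i{\rm SO}(O_i)$ --- a slightly more explicit description of the same object.
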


\begin{proof}  
	Let $\mathcal{S}$ be a subset of $\F$ such that $\Lie(\S)=\Lie(\F)$ and $\S$ does not contain redundant elements. First, let $\s=\iota(\S)\in S_n$ be a cycle with nontrivial orbit $\mathcal{O}$, 
%that is also \cred{equal to the index set of $\S$,} 
then Theorem \ref{thm:son} implies ${\rm Lie}(\mathcal{S})={\rm span}\{\Omega_{ij}:i,j\in\mathcal{O},i<j\}$. Next, if $\s=\s_1\cdots\s_l$ is a permutation as a product of disjoint cycles $\s_1,\dots,\s_l$ with $l\geq2$, then there exists a partition $\{\mathcal{S}_1,\dots,\mathcal{S}_l\}$ of $\mathcal{S}$ such that $\iota(\mathcal{S}_k)=\s_k$ for each $k=1,\dots,l$. Let $\mathcal{O}_k$ denotes the nontrivial orbit of $\s_k$ for each $k=1,\dots,l$, then $\Lie(\S_k)=\{\O_{ij}:i,j\in\mathcal{O}_k,i<j\}$ and the sets $\mathcal{O}_1,\dots,\mathcal{O}_l$ are pairwise disjoint subsets of $\{1,\dots,n\}$. Hence, $\Lie(\mathcal{S}_i)\cap\Lie(\mathcal{S}_j)=\{0\}$ holds for all $i\neq j$, and consequently, we have ${\rm Lie}(\mathcal{S})={\rm Lie}(\mathcal{S}_1)\oplus\cdots\oplus{\rm Lie}\mathcal({S}_l)$, where $\oplus$ denotes the direct sum of vector spaces. By Frobenius theorem \cite{Lee03}, ${\rm Lie}(\mathcal{S})$ is completely integrable, and the set of all its maximal integral manifolds forms a foliation $F$ of SO$(n)$. Since the initial condition of the system in \eqref{eq:son} is the identity matrix $I$, the leaf of $F$ passing through $I$ is the controllable submanifold of the system in \eqref{eq:son}.
\end{proof}

According to Theorem  \ref{thm:son} and Corollary \ref{cor:son}, mapping the control vector fields in $\F$ to permutations provides not only an alternative approach to effectively examine controllability of systems defined on SO($n$), but also a systematic procedure to characterize the controllable submanifold when the system is not fully controllable.

% ===================== Example 4 ===================
\begin{example}[Controllable Submanifold]
	\label{ex:controllable_submanifold}
	\rm	Recall Example \ref{ex:so(5)_2} where the system in \eqref{eq:ex_so(5)_2} is not controllable and there exist no subsets of $\F=\{\O_{12},\O_{23},\O_{45}\}$ such that $\iota(\F)$ is a 5-cycle. In addition, the controllable submanifold is the integral manifold of the involutive distribution $\Delta={\rm span}\{\O_{12}X,\O_{23}X,\O_{13}X,\O_{45}X\}=\{\O_{ij}X:i,j\in\{1,2,3\}\text{ or }i,j\in\{4,5\}\}$. This can be identified by the nontrivial orbits of $\iota(\F)=(1,2,3)(4,5)$. On the other hand, for each $X\in$ SO(5), the complement $\Delta_X^{\perp}={\rm span}\{\O_{ij}X:i\in\{1,2,3\},\ j\in\{4,5\}\}$ of the distribution evaluated at $X$ contains the bridging elements required for full controllability of this system. 
\end{example}

%%%%%%%%%%%%%%%%%%%%%%%%%%%%%%%%%%%%%%%%%%%%%%%%%%%%%%%%%%%%%%%%%%%%%%%%%%%%%%%%
\section{Interpreting Controllability of Systems on SO$(n)$ through a Monoid Structure on $S_n$}
% \section{Controllability Characterization of Systems on SO$(n)$ through a Monoid Structure \cb{in} $S_n$}
\label{sec:*}
From Theorem \ref{thm:son}, the existence of a subset $\S\subseteq\F$ with $\iota(\S)$ an $n$-cycle in $S_n$ determines controllability of the system on SO$(n)$. Checking this condition, in general, may be highly combinatorial, %time consuming
because by Remark \ref{rem:number_of_control} it requires at least $n-1$ controls for this system to be controllable, and there are $m\choose n-1$ subsets of $\F$ consisting of $n-1$ elements for $m>n-1$. On the other hand, in $S_n$ every transposition is its own inverse by the cancellation law, i.e., $(p,q)^{-1}=(p,q)$ for $p,q\in\{1,\dots,n\}$. This group operation may result in a decreased cycle length under the action of the map $\iota$, defined in \eqref{eq:iota}, because $\mathcal{P}(\B)$ is not equipped with a group structure that provides each element an inverse. This issue is illustrated in the following example.

% ================= Example 5 ================
\begin{example}
	\label{ex:redundant}
	\rm Consider the system on SO(4), given by
	\begin{align}
		\label{eq:redundant}
		\dot{X}(t) &= \left[u_1(t)\O_{12}+u_2(t)\O_{23}+u_3(t)\O_{13}+u_4\O_{34}\right]X(t),\nonumber\\
		X(0) &= I.
	\end{align}
	% Let $\mathcal{F}=\{\O_{12},\O_{23},\O_{13},\O_{34}\}$.
	For $\mathcal{S}=\{\O_{12},\O_{23},\O_{34}\}\subset\F=\{\O_{12},\O_{23},\O_{13},\O_{34}\}$, we obtain $\iota(\mathcal{S})=(3,4)(2,3)(1,2)=(1,4,3,2)$, which is a 4-cycle in $S_4$ and which implies controllability of the system in \eqref{eq:redundant} on SO(4) by Theorem \ref{thm:son}. However, for $\S'=\F$, then $\iota(\S')=(3,4)(1,3)(2,3)(1,2)=(3,4)(1,3)(1,3,2)=(3,4)(1,3)^{-1}(1,3)(2,3)=(3,4)(2,3)=(2,3,4)$ is a 3-cycle, in spite of Lie$(\S')=\mathfrak{so}(4)$. This is due to the lack of the inverse operation on $\mathcal{P}(\B)$, while there is a cancellation law in $S_n$. As a result, the \emph{redundant} basis $\O_{13}$, in the sense that it can be generated by iterated Lie brackets of other elements in $\S'$, i.e., 
$[\O_{12},\O_{23}]=\O_{13}$, leads to a degenerate case, that is, the decrease of the cycle length following the permutation operations under the map $\iota$.
\end{example}

% The following remark summarizes when the length of a cycle obtained by acting the map $\iota$ on a subset of $\B$ will be decreased.

% ================= Remark 3 ================
\begin{remark}[\textbf{Degeneracy under the $\iota$ Map}]
	\label{rem:degeneracy}
	\rm	According to Corollary \ref{cor:son}, if $\S\subset\mathcal{B}$ and $\iota(\mathcal{S})=(a_1,\dots,a_k)$, $k<n$, is a $k$-cycle in $S_n$, then ${\rm Lie}(\mathcal{S})={\rm span}\{\O_{ij}\in\mathcal{B}:i,j=a_1,\dots,a_k\}$. For any $\O_{a_s a_t}\in{\rm Lie}(\mathcal{S})\cap\mathcal{B}$, we have 
\begin{align}
&\iota(\O_{a_s a_t})\iota(\mathcal{S})=(a_s,a_t)(a_1,\dots,a_k)\nonumber\\
&=\begin{cases}
(a_1,\dots,a_{t-1})(a_t,\dots,a_k),\ \text{if}\ s=1,\\
(a_2,\dots,a_{s-1},a_k)(a_s,\dots,a_{k-1}), \ \text{if}\ s\neq1\ \text{and}\ t=k,\\
(a_1,\dots,a_{s-1},a_t,\dots,a_k)(a_s,\dots,a_{t-1}),\ \text{otherwise},
\end{cases} \label{eq:redundant_calculation}
\end{align}
which are not $k$-cycles. 
%(the maximum orbit is shorter than $k$)} of length $k$. 
Specifically, if $(s,t)$ is equal to $(1,2)$, $(1,k)$, or $(k-1,k)$, then $\iota(\O_{a_s,a_t})\iota(\mathcal{S})$ is a $(k-1)$-cycle; otherwise, it is a permutation as a product of two disjoint cycles of length $n-s+t$ and $t-s$, respectively. In summary, if $\iota(\mathcal{S})$ is an $l$-cycle and $\mathcal{S}'$ is a subset of $\mathcal{B}$ containing  $l'$ elements such that ${\rm Lie}(\mathcal{S}')\cap{\rm Lie}(\mathcal{S})\neq\varnothing$, then $\iota(\mathcal{S}')\iota(\mathcal{S})$ is a cycle of length no greater than $l+l'-1$ or a permutation as a product of disjoint cycles.
\end{remark}

A proper modification of the permutation multiplication can be made to deal with such degenerate situations, specifically, by redefining the %defining a different 
binary operation on $S_n$. In the following sections, we will introduce an equivalence relation, compatible with an alternative binary operation on $S_n$, so that controllability of the system on SO$(n)$ can be determined directly based on the entire set of the control vector fields $\F$.
% to determine controllability of the system on SO$(n)$ directly based on the entire set of the control vector fields $\F$.

% ========================================================
\subsection{Equivalence Relation on $S_n$}
\label{sec:equivalence}
Because the symmetric group is non-abelian, the map $\iota:\mathcal{P}(\mathcal{B})\rightarrow S_n$, defined in \eqref{eq:iota}, is not a well-defined function. As a result, for two subsets $\S,\S'\subset\F$, $\iota(\S)$ and $\iota(\S')$ may be different permutations sharing the same orbits. For example, $\S=\{\O_{12},\O_{23}\}$ and $\S'=\{\O_{23},\O_{12}\}$ are identical sets, but $\iota(\S)=(2,3)(1,2)=(1,3,2)$ and $\iota(\S')=(1,2)(2,3)=(1,2,3)$ are different permutations with the same orbit. %, i.e., $\iota(\S)\neq\iota(\S')$. 
In this situation, they characterize identical controllable submanifold, which in turn motivates the need to introduce an equivalence relation on $S_n$.

For any $\s,\eta\in S_n$, we define the equivalence relation $\sim$ between them, and say $\s\sim\eta$ if and only if they have the same orbits. It is straightforward to check the transitivity, reflexivity, and symmetry of $\sim$. Let $S_n/\sim$ denote the set of equivalent classes in $S_n$, and $[\s]\in S_n/\sim$ denote the class of all permutations in $S_n$ with the same orbits as $\s$, i.e., $[\s]=\{\eta\in S_n:\eta\sim\s\}$. If $\s_1,\dots,\s_k$ are $k$ transpositions in $S_n$ such that $\s_k\cdots\s_1$ is a $(k+1)$-cycle and $\eta\in S_k$ is a permutation on $k$ letters, then $\s_{\eta(k)}\cdots\s_{\eta(1)}$ is also a $(k+1)$-cycle that has the same nontrivial orbit as $\s_k\cdots\s_1$. Using the equivalence relation, the map $\iota$ in \eqref{eq:iota} can be redefined as a well-defined function, that is, $\iota:\mathcal{P}(\B)\to S_n/\sim$, by $\{\O_{i_1,j_1},\dots,\O_{i_m,j_m}\}\mapsto[(i_1,j_1)\cdots(i_m,j_m)]$. However, the equivalence relation is not necessarily compatible with the group operation on $S_n$, and thus we introduce a %modified 
binary operation on $S_n$ that offers compatibility.

% ========================================================
\subsection{Monoid Structure on $S_n$}
\label{sec:monoid}
Here, we introduce a binary operation $*$ for transpositions in $S_n$ by 
% Because every element of $S_n$ can be decomposed into a product of finitely many transpositions, we first define a binary operation $*$ for transpositions in $S_n$ by
\begin{align}
	\label{eq:*}
	\s*\eta=
	\left\{\begin{array}{ll}
	\s, & \text{if\ } \s=\eta, \\
	\s\cdot\eta, & \text{otherwise},\end{array}\right.
\end{align}
where $\s,\eta\in S_n$ are transpositions and `$\cdot$' is the group operation on $S_n$. Because every permutation in $S_n$ is a product of transpositions, the $*$ operation is applicable to any permutations. Under this operation, %the $*$ operation, 
every transposition is idempotent, and hence every element of $S_n$ has no inverse except for the identity element $e$, which implies that $(S_n,*)$ is not a group. As we know, any cycle $\s$ of length $m\leq n$ in the symmetric group $(S_n,\cdot)$ can be represented as a product of at least $m-1$ distinct transpositions, %under the operation `$\cdot$', 
say $\s=\s_{m-1}\cdots\s_2\cdot\s_1$, so that $\s=\s_{m-1}\cdots\s_1=\s_{m-1}*\cdots*\s_1$ by the definition of $*$ in \eqref{eq:*}. Furthermore, because every permutation $\eta$ in $(S_n,\cdot)$ is a product of finitely many disjoint cycles, then we have $\eta=c_k\cdots c_1=c_k*\cdots *c_1$ %by definition in \eqref{eq:*} 
for some disjoint cycles $c_1,\dots,c_k$. %some cycles $c_1,\dots,c_k$ that are disjoint.

Next, we illustrate the computation of the $*$ operation on $S_n$. Suppose that $\s_1\in S_n$ is a transposition and $\s_2\in S_n$ is a cycle, and let $\mathcal{O}_1,\mathcal{O}_2\subseteq\{1,\dots,n\}$ denote their nontrivial orbits, respectively. If $\mathcal{O}_1\not\subset\mathcal{O}_2$,
then 
$\s_2*\s_1=\s_2\cdot\s_1$; otherwise, if $\mathcal{O}_1\subset\mathcal{O}_2$, then $\s_2=\eta\cdot\s_1$ for some $\eta\in S_n$, which implies that $\s_2*\s_1=\eta\cdot\s_1*\s_1=\eta\cdot\s_1=\s_2$. Because every transposition is its own inverse under the `$\cdot$' operation, this gives $\eta=\s_2\cdot\s_1^{-1}=\s_2\cdot\s_1$. Similarly, if $\s_1$ is a cycle and $\s_2$ is a transposition, it follows that $\s_2*\s_1=\s_2\cdot\s_1$ if $\mathcal{O}_2\not\subset\mathcal{O}_1$, and $\s_2*\s_1=\s_1$ otherwise. Moreover, for any two cycles $\s_1,\s_2\in S_n$, if the cardinality $|\mathcal{O}_1\cap\mathcal{O}_2|<2$, then
% \cred{then the decompositions of $\s_1$ and $\s_2$ into the smallest number of transpositions do not share a common factor.} This implies that 
$\s_2*\s_1=\s_2\cdot\s_1$ because the minimal decompositions of $\s_1$ and $\s_2$ do not share a common transposition. Otherwise, if $|\mathcal{O}_1\cap\mathcal{O}_2|\geq 2$, pick $i,j\in\mathcal{O}_1\cap\mathcal{O}_2$ and define $\tau_1=(i,j)$, then we have $\s_1=\tau_1\cdot\eta_{1}=\tau_1*\eta_{1}$ and $\s_2=\xi_{1}\cdot\tau_1=\xi_{1}*\tau_1$ for some $\eta_{1},\xi_{1}\in S_n$. It follows that
\begin{align}
	\label{eq:*operation_r}
	\s_2*\s_1&=(\xi_{1}*\tau_1)*(\tau_1*\eta_{1})=\xi_{1}*\tau_1*\tau_1*\eta_{1}\nonumber\\
	&= \xi_{1}*\tau_1*\eta_{1}=\xi_{1}*\tau_1\cdot\eta_{1}=\xi_{1}*\s_1, \\
	\label{eq:*operation_l}
	&\qquad\qquad\qquad\qquad\qquad\quad=\s_2*\eta_{1}.
\end{align}
The calculation of the $*$ operation for general permutations follows the same argument as for cycles shown in \eqref{eq:*operation_r} and \eqref{eq:*operation_l}. Note that although $\s_2*\s_1$ may lead to different results as presented in \eqref{eq:*operation_r} and \eqref{eq:*operation_l}, they are equivalent, i.e., resulting in identical orbits, under the $*$ operation over the coset $S_n/\sim$ (see Lemma \ref{lem:*orbit} in Appendix). For instance, consider two permutations in $S_3$, $\s_1=(1,2,3)$ and $\s_2=(1,3,2)$, we have $\s_2*\s_1=\s_1=(1,2,3)\in[(1,2,3)]$ by \eqref{eq:*operation_r} and $\s_2*\s_1=\s_2=(1,3,2)\in[(1,2,3)]$ by \eqref{eq:*operation_l}.

%========================= Theorem 3 ========================
\begin{theorem}
	\label{thm:monoid}
	$(S_n/\sim,*)$ is a commutative monoid.
\end{theorem}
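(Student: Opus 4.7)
The plan is to verify the four monoid axioms: that $*$ descends to a well-defined binary operation on $S_n/\sim$, that it has an identity element, and that it is associative and commutative. Well-definedness follows directly from Lemma \ref{lem:*orbit} in the appendix (already invoked by the authors), which asserts that whenever $\s_1\sim\s_2$ and $\eta_1\sim\eta_2$, the products $\s_1*\eta_1$ and $\s_2*\eta_2$ share the same orbits; this simultaneously reconciles the two (possibly distinct) representatives arising in \eqref{eq:*operation_r} and \eqref{eq:*operation_l}. For the identity, I would take $[e]$, where $e$ is the identity permutation viewed as the empty product of transpositions: combining any transposition-decomposition with the empty word leaves it unchanged, so $\s*e=e*\s=\s$ and hence $[\s]*[e]=[e]*[\s]=[\s]$.

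For associativity and commutativity, the cleanest strategy is to exploit a bijection between $S_n/\sim$ and the lattice $\Pi_n$ of set partitions of $\{1,\dots,n\}$ given by $[\s]\mapsto P(\s)$, where $P(\s)$ is the orbit partition of $\s$. This map is well-defined by the very definition of $\sim$, and it is bijective because every partition is realized by the permutation that makes each non-singleton block into a cycle. Under this identification, I would prove that the $*$-operation corresponds to the partition join, i.e., $P(\s*\eta)=P(\s)\vee P(\eta)$, where $P(\s)\vee P(\eta)$ is the coarsest partition whose blocks are unions of blocks of $P(\s)$ and $P(\eta)$. This would be established by following the case analysis preceding the theorem: first two transpositions, then a transposition paired with a cycle (distinguishing whether the transposition support lies inside the cycle orbit), then two cycles via \eqref{eq:*operation_r}--\eqref{eq:*operation_l}, and finally arbitrary permutations via their disjoint-cycle decompositions. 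Since the join $\vee$ on $\Pi_n$ is manifestly commutative and associative with identity the discrete partition (which corresponds exactly to $[e]$), the transported operation $*$ on $S_n/\sim$ inherits all three properties.

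The main obstacle will be verifying the orbit-join identity $P(\s*\eta)=P(\s)\vee P(\eta)$ in the delicate case where $\s$ and $\eta$ are cycles sharing two or more indices. Here the two formulas $\xi_1*\s_1$ and $\s_2*\eta_1$ from \eqref{eq:*operation_r}--\eqref{eq:*operation_l} must both be shown to produce the same orbit partition, equal to the join, which requires tracking how absorbing the common transposition $\tau_1=(i,j)$ reshapes the cycle structure without altering the union of orbits. Once this step is in place, the reduction to arbitrary permutations via disjoint cycle decompositions is routine, since disjoint cycles contribute independent blocks to both $P(\cdot)$ and the join $\vee$.
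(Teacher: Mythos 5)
Your proposal is correct in outline but takes a genuinely different route from the paper for the two substantive axioms. The paper disposes of well-definedness, commutativity, and associativity by direct case analysis on cycle decompositions (Lemmas \ref{lem:*orbit} and \ref{lem:*associativity}, Corollaries \ref{cor:*orbit} and \ref{cor:*commutativity}) and then checks the identity $[e]$ by hand; you keep essentially the same treatment of well-definedness (though the statement you quote is really Corollary \ref{cor:*orbit} rather than Lemma \ref{lem:*orbit}, which only covers cycles) and of the identity, but you replace the associativity and commutativity arguments by transporting $*$ along the bijection $[\s]\mapsto P(\s)$ onto the partition lattice $\Pi_n$ and proving the single identity $P(\s*\eta)=P(\s)\vee P(\eta)$. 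This is more conceptual and buys more than the theorem asks for: it identifies $(S_n/\sim,*)$ as the join-semilattice of set partitions, so commutativity, associativity, idempotence, and the identity (the discrete partition, corresponding to $[e]$) are all immediate, and it also explains directly why $\~{\iota}(\F)=[(1,\dots,n)]$ amounts to connectivity of the associated graph in Section \ref{sec:visualization}. The price is that the join identity concentrates all of the appendix's difficulty into one statement: besides the overlapping-cycle case you flag (which is precisely the content of Lemma \ref{lem:*orbit}), you must extend the identity to products of disjoint cycles by an induction organized on the structure of the permutations (say, a general permutation against a single cycle first, then against a general permutation), and you must take care not to invoke associativity of $*$ while doing so, since that is exactly what the lattice identification is meant to deliver. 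With the induction set up that way the argument goes through and is, if anything, cleaner than the paper's case-by-case verification.
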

\begin{proof}
	The proof of this theorem is based on the associativity, invariance, and commutativity of the $*$ operation over the equivalence classes in $S_n$, shown in Lemmas \ref{lem:*orbit} and \ref{lem:*associativity} and Corollaries \ref{cor:*orbit} and \ref{cor:*commutativity} in Appendix. Moreover, the identify element $e$ of the symmetric group $(S_n,\cdot)$ is unique, so that $\s*e=\s\cdot e=\s=e\cdot\s=e*\s$ holds for any $\s\in S_n$. In addition, because $e$ is the only element without nontrivial orbit in $S_n$, we have $[e]=\{e\}$, which leads to $[\s]*[e]=[\s*e]=[e*\s]=[e]*[\s]=[\s]$ for any $[\s]\in S_n/\sim$. Note that in the terms $[\s]*[e]$ and $[e]*[\s]$ above, $*$ denotes the operation on the equivalence classes induced by the $*$ operation on permutations. Therefore, $[e]$ is the identity element of $(S_n/\sim,*)$, and, together with the associativity and commutativity of $*$ on $S_n/\sim$, $(S_n/\sim,*)$ is a commutative monoid.
\end{proof}

\begin{comment}
\cred{Note, for a transposition $\s_1$ and a cycle $\s_2$ in $S_n$, applying the calculation of $*$ on $S_n/\sim$, \eqref{eq:*operation_r} and \eqref{eq:*operation_l} yield
	\begin{align}
	\label{eq:*_cycles}
	[\s_1]*[\s_2]=[\s_1*\s_2]=
	\begin{cases}
	[\s_2],\quad{\rm if\ } \mathcal{O}_1\subseteq\mathcal{O}_2,\\
	[\s_1\cdot\s_2],\quad {\rm otherwise},
	\end{cases}
	\end{align}
	where $\mathcal{O}_1$ and $\mathcal{O}_2$ denote the largest orbits of $\s_1$ and $\s_2$, respectively. (need to decide whether to keep it)
}
\end{comment}

With the algebraic structures defined in Section \ref{sec:equivalence} and \ref{sec:monoid}, we will characterize controllability of systems defined on SO($n$) over the monoid $(S_n/\sim,*)$.

% ========================================================
\subsection{Controllability Characterization over the Monoid Structure}
% \subsection{Characterization of Controllability of Systems on ${\rm SO}(n)$ over a Monoid Structure in $S_n$}
\label{sec:controllability_monoid}
Recall that $\iota:\mathcal{P}(\mathcal{B})\rightarrow S_n$ defined in \eqref{eq:iota} in Section \ref{sec:SO(n)_controllability} by $\{\O_{i_1j_1}\dots,\O_{i_mj_m}\}\mapsto(i_m,j_m)\cdots(i_1,j_1)$ maps standard basis vector fields in $\mathfrak{so}(n)$ to transpositions in $S_n$ 
%\cred{(or the Lie brackets in $\mathfrak{so(n)}$ to permutation operations in $S_n$)}, %where the elements in $\mathcal{P}(\mathcal{B})$ are subsets of $\mathcal{B}$. 
and that $\iota$ is not a well-defined function on the power set $\mathcal{P}(\mathcal{B})$. To study controllability over a monoid structure in $S_n$, we modify the map $\iota$ by lifting its range from $(S_n,\cdot)$ to $(S_n/\sim,*)$ and define $\~{\iota}:(\mathcal{P}(\mathcal{B}),\cup)\rightarrow(S_n/\sim,*)$ by $\{\O_{i_1j_1},\dots,\O_{i_kj_k}\}\mapsto[(i_1,j_1)*\cdots*(i_k,j_k)]$, where $(\mathcal{P}(\mathcal{B}),\cup)$ is a commutative monoid because the union $\cup:\mathcal{P}(\mathcal{B})\times\mathcal{P}(\mathcal{B})\rightarrow\mathcal{P}(\mathcal{B})$ is a binary operation on $\mathcal{P}(\mathcal{B})$ with the empty set $\varnothing$ as the identity element of $(\mathcal{P}(\mathcal{B}),\cup)$, and is associative and commutative.

% Recall the map $\iota:\mathcal{P}(\mathcal{B})\rightarrow S_n$ defined in \eqref{eq:iota} in Section \ref{sec:SO(n)_controllability} by $\{\O_{i_1j_1}\dots,\O_{i_mj_m}\}\mapsto(i_m,j_m)\cdots(i_1,j_1)$, where the elements in $\mathcal{P}(\mathcal{B})$ are subsets of $\mathcal{B}$. Also, the union $\cup:\mathcal{P}(\mathcal{B})\times\mathcal{P}(\mathcal{B})\rightarrow\mathcal{P}(\mathcal{B})$ is a binary operation on $\mathcal{P}(\mathcal{B})$ with the empty set $\varnothing$ being the identity element of $(\mathcal{P}(\mathcal{B}),\cup)$. Furthermore, associativity and commutativity also hold for $\cup$, so that $(\mathcal{P}(\mathcal{B}),\cup)$ is a commutative monoid. Now, we modify the map $\iota$
% %in \eqref{eq:iota}
% \cred{by lifting its range from $(S_n,\cdot)$ to $(S_n/\sim,*)$}, i.e., we define $\~{\iota}:(\mathcal{P}(\mathcal{B}),\cup)\rightarrow(S_n/\sim,*)$ by $\{\O_{i_1j_1},\dots,\O_{i_kj_k}\}\mapsto[(i_1,j_1)*\cdots*(i_k,j_k)]$. %{\color{red} The following lemma implies that the two issues mentioned at the beginning of this section are solved.}

%========================= Lemma 3 ========================
\begin{lemma}
	The map $\~{\iota}:(\mathcal{P}(\mathcal{B}),\cup)\rightarrow(S_n/\sim,*)$ defined by $\{\O_{i_1j_1},\dots,\O_{i_kj_k}\}\mapsto[(i_1,j_1)*\cdots*(i_k,j_k)]$ is a monoid homomorphism.
\end{lemma}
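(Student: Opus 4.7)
The plan is to verify the four standard ingredients of a monoid isomorphism: well-definedness, preservation of the identity, the homomorphism property with respect to $\cup$ and $*$, and bijectivity. The first three rest on results already established in this section. Well-definedness needs attention because an element of $\mathcal{P}(\mathcal{B})$ is an unordered set, so $[(i_1,j_1)*\cdots*(i_k,j_k)]$ must be independent of the listing order of the transpositions; this follows from commutativity of $*$ on $S_n/\sim$ (Corollary \ref{cor:*commutativity}), combined with the idempotence $\tau*\tau=\tau$ on transpositions $\tau$ (the first case of the definition \eqref{eq:*}). Identity preservation is immediate: $\tilde{\iota}(\varnothing)=[e]$, which is the identity of $(S_n/\sim,*)$ by Theorem \ref{thm:monoid}.

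For the homomorphism property $\tilde{\iota}(S_1\cup S_2)=\tilde{\iota}(S_1)*\tilde{\iota}(S_2)$, I would expand both sides as the equivalence class of a single $*$-product over the transpositions drawn from $S_1\cup S_2$, and use associativity (Lemma \ref{lem:*associativity}) to regroup. The only subtlety is that a generator lying in $S_1\cap S_2$ appears once in the union on the left but twice after splitting on the right; idempotence of $*$ on transpositions again collapses the duplication. Surjectivity follows the line of Lemma \ref{lem:iota}: every $\sigma\in S_n$ factors as a product of transpositions, each transposition $(i,j)$ is the image of a unique $\Omega_{ij}\in\mathcal{B}$, and so collecting the generators appearing in any transposition decomposition of a representative of $[\sigma]$ produces a preimage.

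The main obstacle is injectivity. The $*$ operation is specifically engineered to absorb redundant transpositions---for example, $(1,2)*(2,3)*(1,3)$ lies in the class $[(1,2,3)]$, the same class as $(1,2)*(2,3)$---so at face value distinct subsets of $\mathcal{B}$ can land in a common equivalence class. The route forward, I expect, is to recast $\tilde{\iota}(S)$ combinatorially: the $*$-product of the transpositions $\{(i,j):\Omega_{ij}\in S\}$ ought to produce a permutation whose orbit partition coincides with the vertex partition induced by the connected components of the graph on $\{1,\ldots,n\}$ with edge set $\{(i,j):\Omega_{ij}\in S\}$. Using this characterization, I would either restrict the domain to edge-minimal (forest-like) subsets of $\mathcal{B}$, or pass to a quotient of $\mathcal{P}(\mathcal{B})$ that identifies subsets inducing the same component partition, and then verify the resulting bijection with $S_n/\sim$ in that refined setting. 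Reconciling this combinatorial refinement with the statement as written---and in particular justifying the uniqueness part of "isomorphism" despite the apparent redundancy---is the step I expect to consume the bulk of the technical effort.
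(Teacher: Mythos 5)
Your handling of well-definedness, identity preservation, the homomorphism property, and surjectivity coincides with the paper's own proof: the paper likewise invokes the compatibility and commutativity of $*$ on $S_n/\sim$ (Corollaries \ref{cor:*orbit} and \ref{cor:*commutativity}) to make the image independent of the listing order, writes $\tilde{\iota}(\mathcal{S}\cup\mathcal{S}')=[\sigma*\sigma']=[\sigma]*[\sigma']$ for the homomorphism property (you are in fact more careful than the paper here, since you notice that generators in $\mathcal{S}\cap\mathcal{S}'$ get duplicated and must be collapsed by idempotence), and obtains surjectivity from the decomposition of permutations into transpositions. The genuine divergence is injectivity, and there your suspicion is correct rather than a defect of your argument. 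The paper's proof shows only that $\tilde{\iota}^{-1}([e])=\{\varnothing\}$ and concludes injectivity from this; that inference is valid for group homomorphisms but not for monoid homomorphisms, so the paper's proof of this step is fallacious. Worse, the map really is not injective, for exactly the reason you give: by the paper's own Remark \ref{rmk:redundant}, adjoining a redundant element leaves the image unchanged, so already for $n=3$ the four subsets $\{\Omega_{12},\Omega_{23}\}$, $\{\Omega_{12},\Omega_{13}\}$, $\{\Omega_{13},\Omega_{23}\}$, and $\mathcal{B}$ itself all map to the class of $3$-cycles. A cardinality count makes bijectivity impossible in general: $|\mathcal{P}(\mathcal{B})|=2^{n(n-1)/2}$, while the classes of $S_n/\sim$ correspond to orbit partitions of $\{1,\dots,n\}$, of which there are far fewer once $n\geq 3$.

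Your proposed repair is the right one: $\tilde{\iota}(\mathcal{S})$ is determined by the connected components of the graph on $\{1,\dots,n\}$ with edge set $\{(i,j):\Omega_{ij}\in\mathcal{S}\}$, so one obtains a genuine bijection only after restricting to spanning-forest-like subsets or quotienting $\mathcal{P}(\mathcal{B})$ by the induced-partition equivalence. Note also that nothing downstream is damaged by dropping injectivity: Theorem \ref{thm:*operation} and Corollary \ref{cor:*operation} use only that $\tilde{\iota}$ is a well-defined surjective monoid homomorphism whose image records the component structure of $\mathcal{F}$. So the correct conclusion is that the lemma should claim a surjective homomorphism (or an isomorphism onto $S_n/\sim$ from the quotient you describe), and your proposal, while it does not "prove" the statement as written, correctly diagnoses why it cannot be proved.
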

\begin{proof}
We start with proving that $\~{\iota}$ is well-defined. Consider two subsets, $\S$ and $\S'$, of $k$ identical elements in $\mathcal{P}(\mathcal{B})$, say $\mathcal{S}=\{\O_{i_1j_1},\dots,\O_{i_kj_k}\}$ and $\mathcal{S}'=\{\O_{i_1'j_1'},\dots,\O_{i_k'j_k'}\}$, where $i_s'=i_{\s(t)}$ and $j_s'=j_{\s(t)}$, $s,t=1,\dots,k$, for some $\s\in S_k$. By the definition of $\~{\iota}$ %on $S_n/\sim$ 
and the compatibility of $*$ with $\sim$ from Corollary \ref{cor:*orbit}, we have $\~{\iota}(\S)=[(i_1,j_1)*\cdots*(i_k,j_k)]=[(i_1,j_1)]*\cdots*[(i_k,j_k)]$ and $\~{\iota}(\S')=[(i_1',j_1')*\cdots*(i_k',j_k')]=[(i_1',j_1')]*\cdots*[(i_k',j_k')]=[(i_{\s(1)},j_{\s(1)})]*\cdots*[(i_{\s(k)},j_{\s(k)})]$. Since $(S_n/\sim,*)$ is commutative, $[(i_1,j_1)]*\cdots*[(i_k,j_k)]=[(i_{\s(1)},j_{\s(1)})]*\cdots*[(i_{\s(k)},j_{\s(k)})]$, and thus $\~{\iota}(\mathcal{S})=\~{\iota}(\mathcal{S}')$, which implies that $\~{\iota}$ is well-defined. In addition, if $\~{\iota}(\S)=[\s]$ and $\~{\iota}(\S')=[\s']$, then it holds that $\~{\iota}(\S\cup\S')=[\s*\s']=[\s]*[\s']=\~{\iota}(\S)*\~{\iota}(\S')$. This, together with $\tilde\iota(\varnothing)=[e]$, concludes that $\tilde{\iota}$ is a monoid homomorphism.
%What remains to show is that $\~\iota$ is a bijection. Notice that every element in $\B$ is mapped by $\~\iota$ to a transposition in $S_n/\sim$, and no element, but the identity $[e]$, in $S_n/\sim$ has an inverse. Consequently, for any $\S\in\mathcal{P}(\B)$, $\~{\iota}(\S)\in S_n/\sim$ contains at least one nontrivial orbit. Because $[e]$ has no nontrivial orbit, we have $\~{\iota}^{-1}([e])=\varnothing$, and since $\varnothing$ is the identity element of the monoid $(\mathcal{P}(\B),\cup)$, $\~{\iota}$ is injective. The surjectivity of $\~{\iota}$ follows from the fact that every permutation in $S_n$ is a product of finitely many transpositions under the binary operation $*$. Therefore, $\~{\iota}$ is an isomorphism between the monoids $(\mathcal{P}(\B),\cup)$ and $(S_n/\sim,*)$.
\end{proof}

Now, analogous to the developments in Section \ref{sec:SO(n)_controllability}, we explore the correspondance between Lie bracket operations on $\mathcal{B}$ and the $*$ operation on $S_n/\sim$, which will facilitate the controllability analysis. The basic idea is illuminated by the following example.

%=============================== Example 6 ==================================
\begin{example} \rm
	\label{ex:redundant_*}
	Recall the system on SO($4$) in \eqref{eq:redundant} in Example \ref{ex:redundant}, where $\mathcal{F}=\{\O_{12},\O_{23},\O_{13},\O_{34}\}$ and the system is controllable on SO(4). However, $\iota(\F)=(2,3,4)$ is of length $3<4$, which does not report controllability. This is due to the existence of redundant elements, e.g., $\O_{13}=[\O_{12},\O_{23}]$ or $\O_{23}=[\O_{13},\O_{12}]$, so that the degeneracy occurs following the composition of the $\iota$ operations, which results in the reduced cycle length. On the other hand, on the monoid $(S_4/\sim,*)$, we obtain $\~{\iota}(\mathcal{F})=[(1,2)]*[(2,3)]*[(1,3)]*[(3,4)]=[(1,2)*(2,3)]*[(1,3)]*[(3,4)]=[(1,2,3)]*[(1,3)]*[(3,4)]=[(1,3)]*[(1,2)]*[(1,3)]*[(3,4)]=[(1,2)]*[(1,3)]*[(1,3)]*[(3,4)]=[(1,2)]*[(1,3)]*[(3,4)]=[(1,2,3,4)]$, which is the equivalent class of 4-cycles in $S_4$.
\end{example}

%=============================== Remark 4 ==================================
\begin{remark}[\textbf{Nondegeneracy under the $\~\iota$ Map}] %to Redundant Elements] %Invariance of the Image of $\~\iota$] %Retention of the Cycle Length] [Robustness of $\~\iota$ to Redundant Elements 
\label{rmk:redundant}
	\rm	
	Let $\S\subset\mathcal{B}$ be a set containing $k-1$ elements such that $\~\iota(\S)=[\s]=[(a_1,\dots,a_k)]$, i.e., $\S$ does not have redundant elements, then $\iota(\S)=\eta$ holds for some $\eta\in[\s]$, and, consequently, ${\rm Lie}(\mathcal{S})={\rm span}\{\O_{ij}\in\mathcal{B}:i,j=a_1,\dots,a_k\}$ by Corollary \ref{cor:son}. For any $\O_{ij}\in\mathcal{B}$, there are two possibilities (i) if $\O_{ij}\not\in{\rm Lie}(\mathcal{S})$, then $\{i,j\}\not\subseteq\{a_1,\dots,a_k\}$ and thus $\~{\iota}(\O_{ij})*\~{\iota}(\mathcal{S})=[(i,j)]*[(a_1,\dots,a_k)]=[(i,j)*(a_1,\dots,a_k)]=[(i,j)\cdot(a_1,\dots,a_k)]$. In this case, if $\{i,j\}\cap\{a_1,\dots,a_k\}\neq\varnothing$, say $\{i,j\}\cap\{a_1,\dots,a_k\}=\{j\}$, then $\~{\iota}(\O_{ij})*\~{\iota}(\mathcal{S})=[(i,a_1,\dots,a_k)]$ is the equivalence class of $(k+1)$-cycles with the nontrivial orbit $\{i,a_1,\dots,a_k\}$; otherwise, if $\{i,j\}\cap\{a_1,\dots,a_k\}=\varnothing$, then $\~\iota(\O_{ij})*\~\iota(\mathcal{S})=[(i,j)\cdot(a_1,\dots,a_k)]$ is the equivalence class of permutations with two nontrivial orbits $\{i,j\}$ and $\{a_1,\dots,a_k\}$; (ii) if $\O_{ij}\in{\rm Lie}(\mathcal{S})$, i.e., $\O_{ij}$ is redundant, then $\{i,j\}\subseteq\{a_1,\dots,a_k\}$, and there exists a unique $\xi\in S_n$ such that $\eta=(i,j)\cdot\xi=(i,j)*\xi$. Hence, $\~{\iota}(\O_{ij})*\~{\iota}(\mathcal{S})=[(i,j)]*[(i,j)*\xi]=[(i,j)*(i,j)*\xi]=[(i,j)*\xi]=[(i,j)\cdot\xi]=[\eta]=[\s]$, which implies that $\~{\iota}(\S\cup\{\O_{ij}\})=\~{\iota}(\S)=[\s]$. This shows the invariance of the image of $\~\iota$ when acting on a subset $\S\subset\B$ consisting of redundant elements. %\cred{(Need part (ii) only?)}
\end{remark}

Example \ref{ex:redundant_*} and Remark \ref{rmk:redundant} illustrate that on the monoid $(S_n/\sim,*)$, $\tilde{\iota}(\mathcal{F})$ results in a class of permutations containing cycles of maximum possible length. Therefore, controllability of the system in \eqref{eq:son} can be examined directly using the entire set of control vector fields $\F$ through $\~\iota$ over $(S_n/\sim,*)$.

%=========================== Theorem 4 ===============================
\begin{theorem}
	\label{thm:*operation}
	The control system defined on ${\rm SO}(n)$ as in \eqref{eq:son} is controllable if and only if $\tilde{\iota}(\mathcal{F})=[(1,\dots,n)]$ in $(S_n/\sim,*)$.
\end{theorem}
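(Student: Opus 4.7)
The plan is to reduce Theorem \ref{thm:*operation} to Theorem \ref{thm:son}. Since that earlier result characterizes controllability on SO$(n)$ by the existence of some $\S\subseteq\F$ with $\iota(\S)$ an $n$-cycle, it suffices to prove the equivalence
\[
\bigl(\exists\,\S\subseteq\F:\ \iota(\S)\ \text{is an}\ n\text{-cycle}\bigr)\ \Longleftrightarrow\ \~\iota(\F)=[(1,\dots,n)].
\]

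For the \emph{forward} implication, I will begin with a subset $\S\subseteq\F$ furnished by Theorem \ref{thm:son}, so that $\iota(\S)$ is an $n$-cycle and hence $\~\iota(\S)=[(1,\dots,n)]$. Writing $\F=\S\cup(\F\setminus\S)$ and applying the monoid homomorphism property of $\~\iota$ established in the previous lemma yields $\~\iota(\F)=\~\iota(\S)*\~\iota(\F\setminus\S)$. Every element $\O_{ab}\in\F\setminus\S$ has $\{a,b\}\subseteq\{1,\dots,n\}$, so its orbit is contained in the orbit of the $n$-cycle; by the absorption rule for $*$ (Section \ref{sec:monoid}, or Remark \ref{rmk:redundant} case (ii)), $[(a,b)]*[(1,\dots,n)]=[(1,\dots,n)]$. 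Iterating this absorption over $\F\setminus\S$ concludes $\~\iota(\F)=[(1,\dots,n)]$.

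For the \emph{reverse} implication, I will extract the desired $\S$ from $\F$ by a greedy selection. Enumerate $\F=\{e_1,\dots,e_m\}$, initialize $\S_0=\varnothing$, and recursively set $\S_k=\S_{k-1}\cup\{e_k\}$ whenever this strictly changes $\~\iota$, and $\S_k=\S_{k-1}$ otherwise. By construction $\~\iota(\S_m)=\~\iota(\F)=[(1,\dots,n)]$, and each element retained was non-redundant at the moment of its insertion. By Remark \ref{rmk:redundant} case (i), inserting a non-redundant transposition is exactly the regime in which $*$ coincides with the ordinary permutation product $\cdot$. Hence, using commutativity of $*$ from Theorem \ref{thm:monoid} to reconcile orderings, the $*$-product representing $\~\iota(\S_m)$ equals the $\cdot$-product defining $\iota(\S_m)$, giving $\iota(\S_m)\in[(1,\dots,n)]$. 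Thus $\iota(\S_m)$ is an $n$-cycle, and controllability follows from Theorem \ref{thm:son}.

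The principal obstacle will be in the reverse direction: making rigorous the claim that the greedy non-redundant subset $\S_m$ has its $\iota$-image \emph{equal} to a representative of $[(1,\dots,n)]$, rather than merely sharing its orbit structure through $\~\iota$. This hinges on the dichotomy in Remark \ref{rmk:redundant}: along any insertion trajectory that avoids case (ii), the operations $*$ and $\cdot$ are interchangeable, so the $*$-computation defining $\~\iota$ tracks the actual cycle growth under $\cdot$. Once this correspondence between $*$-growth and $\cdot$-growth is set up, the remaining bookkeeping, including the cardinality identity $|\S_m|=n-1$ (Remark \ref{rem:number_of_control}), follows directly.
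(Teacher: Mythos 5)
Your proposal is correct and follows essentially the same route as the paper: both directions reduce to Theorem \ref{thm:son}, with the forward implication obtained by absorbing the elements of $\F\setminus\S$ into the $n$-cycle class via the idempotency of $*$ (exactly the paper's decomposition $\s=(i,j)\cdot\eta$), and the reverse implication obtained by extracting a non-redundant subset on which $*$ agrees with $\cdot$. Your greedy extraction in the reverse direction merely spells out what the paper compresses into the remark that ``the proof of the sufficiency is identical to that of Theorem \ref{thm:son}.''
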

\begin{proof}
The proof of the sufficiency is identical to %exactly the same as 
that of Theorem \ref{thm:son}, so what remains to show is the necessity.

Because the system in \eqref{eq:son} is controllable on SO$(n)$, ${\rm Lie}(\mathcal{F})=\mathfrak{so}(n)$. By Theorem \ref{thm:son}, there is a subset $\mathcal{S}$ of $\mathcal{F}$ such that $|\mathcal{S}|=n-1$ and $\iota(\mathcal{S})=\s$ is an $n$-cycle %a cycle of length $n$ 
in $(S_n,\cdot)$. This implies that the nontrivial orbit of $\iota(\mathcal{S})$ can only be $\{1,\dots,n\}$, and thus $\s\in[(1,\dots,n)]$. If $\mathcal{S}=\mathcal{F}$, then we are done. If not, i.e., $\S\subset\F$, then for any $\O_{ij}\in\mathcal{F}\backslash\mathcal{S}$, one can decompose $\s=(i,j)\cdot\eta$ for some permutation $\eta\in S_n$.
%$\{i,j\}$ must be a subset of $\{1,\dots,n\}$, and thus there exists a permutation $\eta\in S_n$ such that $\s=(i,j)\cdot\eta$, where $\eta$ has no nontrivial orbits containing $\{i,j\}$. It follows that $\s=(i,j)*\eta$. 
Therefore, $\tilde{\iota}(\O_{ij})*\tilde{\iota}(\mathcal{S})=[(i,j)]*[\s]=[(i,j)]*[(i,j)*\eta]=[(i,j)*(i,j)*\eta]=[(i,j)*\eta]=[(i,j)\cdot\eta]=[\s]$. Because $\s\in[(1,\dots,n)]$, $[\s]=[(1,\dots,n)]$ by the fact that every permutation in $S_n$ can only be in one equivalent class of $S_n/\sim$. Since $\O_{ij}\in\mathcal{F}\backslash\mathcal{S}$ was arbitrary, we conclude that $\tilde{\iota}(\mathcal{F})=\prod_{\O_{ij}\not\in\S}\tilde{\iota}(\O_{ij})*\prod_{\O_{ij}\in\S}\tilde{\iota}(\O_{ij})=\prod_{\O_{ij}\in\F\backslash\S}\tilde{\iota}(\O_{ij})*\tilde{\iota}(\mathcal{S})=[\s]=[(1,\dots,n)]$.
\end{proof}

Similar to Corollary \ref{cor:son}, we can identify the controllable submanifold of the system \eqref{eq:son} through the monoid $(S_n/\sim,*)$. %\cred{Additionally, due to the fact that every element in $S_n/\sim$ except the identity has no inverse, a stronger result can be characterized as follows (remove ths?):}

%=========================== Corollary 2 ===============================
\begin{corollary}
	\label{cor:*operation}
	The controllable submanifold of the system defined on {\rm SO}($n$) in \eqref{eq:son} is the integral manifold of the involutive distribution $\Delta=\Delta_1\oplus\cdots\oplus\Delta_k$, where $\Delta_l={\rm span}\{\O_{ij}X:i,j\in\mathcal{O}_l\}$, if and only if $\~{\iota}(\mathcal{F})=[\s_1\cdot\s_2\cdots\s_k]$, where $\s_l\in S_n$ are disjoint cycles and $\mathcal{O}_l$ denote the respective nontrivial orbits of $\s_l$ for $l=1,\dots,k$.
\end{corollary}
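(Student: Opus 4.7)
The plan is to reduce the corollary to Corollary \ref{cor:son} by exploiting the invariance property of $\tilde{\iota}$ under redundant elements established in Remark \ref{rmk:redundant}. The bridge between the two statements is the observation that the orbits that determine the controllable submanifold are an invariant of the equivalence class under $\sim$, so it suffices to show that $\tilde{\iota}(\mathcal{F})$ and $\iota(\mathcal{S})$ carry the same orbit data for an appropriately chosen $\mathcal{S} \subseteq \mathcal{F}$.

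First I would extract a subset $\mathcal{S} \subseteq \mathcal{F}$ containing no redundant elements and satisfying $\mathrm{Lie}(\mathcal{S}) = \mathrm{Lie}(\mathcal{F})$; such a set exists by successively discarding any basis vector field in $\mathcal{F}$ that is recoverable as an iterated Lie bracket of the others. Corollary \ref{cor:son} already identifies the controllable submanifold as the leaf through $I$ of the foliation integrating $\mathrm{Lie}(\mathcal{S})$, and writes the latter as a direct sum indexed by the nontrivial orbits $\mathcal{O}_1,\ldots,\mathcal{O}_k$ of the disjoint-cycle decomposition $\iota(\mathcal{S}) = \sigma_1 \cdots \sigma_k$. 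Hence the problem reduces to matching the orbit data of $\tilde{\iota}(\mathcal{F})$ with that of $\iota(\mathcal{S})$.

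Next I would handle the forward direction. Since $\mathcal{F} \setminus \mathcal{S}$ consists entirely of redundant elements lying in $\mathrm{Lie}(\mathcal{S})$, I apply Remark \ref{rmk:redundant} (case (ii)) inductively: for each $\Omega_{ij} \in \mathcal{F} \setminus \mathcal{S}$, the indices $i,j$ are contained in some $\mathcal{O}_l$, so $\tilde{\iota}(\Omega_{ij}) * \tilde{\iota}(\mathcal{S}) = \tilde{\iota}(\mathcal{S})$. Iterating across all redundant elements and using the commutativity and associativity of $*$ from Theorem \ref{thm:monoid}, I obtain $\tilde{\iota}(\mathcal{F}) = \tilde{\iota}(\mathcal{S}) = [\sigma_1 \cdots \sigma_k]$. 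The nontrivial orbits of this class are exactly $\mathcal{O}_1,\ldots,\mathcal{O}_k$, so the distribution $\Delta = \Delta_1 \oplus \cdots \oplus \Delta_k$ coincides with $\mathrm{span}\{\Omega_{ij}X : \Omega_{ij} \in \mathrm{Lie}(\mathcal{S})\}$, and the controllable submanifold is its integral manifold through $I$.

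For the converse, I would start from $\tilde{\iota}(\mathcal{F}) = [\sigma_1 \cdots \sigma_k]$ with disjoint cycles having orbits $\mathcal{O}_1, \ldots, \mathcal{O}_k$. I would partition $\mathcal{F}$ into blocks $\mathcal{F}_l = \{\Omega_{ij} \in \mathcal{F} : \{i,j\} \subseteq \mathcal{O}_l\}$ (together with any $\Omega_{ij}$ whose indices are trivial fixed points of $\sigma_1 \cdots \sigma_k$, which cannot occur because such elements would enlarge the orbit structure under $*$). Applying Theorem \ref{thm:*operation} internally to each $\mathcal{F}_l$ viewed on $\mathrm{SO}(|\mathcal{O}_l|)$ shows that $\mathrm{Lie}(\mathcal{F}_l) = \mathrm{span}\{\Omega_{ij} : i,j \in \mathcal{O}_l\}$, so $\mathrm{Lie}(\mathcal{F}) = \bigoplus_l \mathrm{Lie}(\mathcal{F}_l)$. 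This is exactly the involutive distribution $\Delta$, and invoking Frobenius as in Corollary \ref{cor:son} identifies its integral manifold through $I$ as the controllable submanifold.

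The main obstacle I anticipate is making the partition step in the converse precise when $\mathcal{F}$ contains many redundant elements whose individual transpositions do not lie in a single $\mathcal{O}_l$; I would need to argue via Remark \ref{rmk:redundant} that any $\Omega_{ij} \in \mathcal{F}$ must satisfy $\{i,j\} \subseteq \mathcal{O}_l$ for some $l$, since otherwise $(i,j)$ would either merge two orbits of $\tilde{\iota}(\mathcal{F})$ or introduce a new nontrivial orbit, contradicting the assumed decomposition. Once this is established, the orbit-preserving behaviour of $*$ on $S_n/\sim$ (Lemma \ref{lem:*orbit}) closes the argument.
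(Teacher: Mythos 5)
Your proposal is correct, and the forward direction (integral manifold of $\Delta$ implies $\~{\iota}(\F)=[\s_1\cdots\s_k]$) is essentially the paper's argument: both invoke Corollary \ref{cor:son} to obtain a non-redundant generating family whose $\iota$-image carries the orbits $\mathcal{O}_1,\dots,\mathcal{O}_k$, and then absorb the remaining (redundant) elements of $\F$ one at a time via case (ii) of Remark \ref{rmk:redundant} together with commutativity and associativity of $*$; the only cosmetic difference is that you strip $\F$ down to a single redundancy-free $\S$, whereas the paper works with a union $\F_1\cup\cdots\cup\F_k$ of per-orbit subsets. Where you genuinely diverge is the converse. The paper proves two inclusions: $\Delta\subseteq\Lie(\F)$ via Corollary \ref{cor:son}, and $\Lie(\F)\subseteq\Delta$ by contradiction, supposing some generated $\O_{ab}\in\Lie(\F)$ straddles two orbits and showing $[(a,b)]*\~{\iota}(\F)$ would merge $\mathcal{O}_i$ and $\mathcal{O}_j$, contradicting $[(a,b)]*\~{\iota}(\F)=\~{\iota}(\F)$. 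You instead partition $\F$ itself into blocks $\F_l$ supported on the $\mathcal{O}_l$ and apply Theorem \ref{thm:*operation} blockwise to compute $\Lie(\F)=\bigoplus_l\Lie(\F_l)=\Delta$ directly. Your route is arguably cleaner because it never has to reason about arbitrary elements of $\Lie(\F)\cap\B$, only about elements of $\F$; the price is two lemmas you should make explicit: (i) every $\O_{ij}\in\F$ satisfies $\{i,j\}\subseteq\mathcal{O}_l$ for some $l$, which follows since the orbit partition of $\~{\iota}(\F)$ is the join of the edge partitions $\{i_s,j_s\}$ (Lemma \ref{lem:*orbit} and the homomorphism property of $\~{\iota}$), so a cross-orbit or fixed-point edge would alter the assumed orbit structure; and (ii) $\~{\iota}(\F_l)=[\s_l]$ for each $l$, which holds because the $\~{\iota}(\F_l)$ have disjoint supports and their $*$-product has exactly $k$ nontrivial orbits, forcing each factor to be a single cycle class on $\mathcal{O}_l$. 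With those two points spelled out, your argument is complete and buys a constructive identification of $\Lie(\F)$ rather than a proof by contradiction.
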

\begin{proof}
	(Necessity) By the assumption, we have $\Delta=\Delta_1\oplus\cdots\oplus\Delta_k$ with $\Delta_l={\rm span}\{\O_{ij}X:i,j\in\mathcal{O}_l\}$. By Corollary \ref{cor:son}, there exist subsets $\mathcal{F}_1,\dots,\mathcal{F}_k$ of $\mathcal{F}$ such that $\Lie(\mathcal{F}_l)={\rm span}\{\O_{ab}:a,b\in\mathcal{O}_l\}$ and $\iota(\mathcal{F}_l)=\s_l$ for $l=1,\dots,k$, where $\s_l$ is a cycle with the nontrivial orbit $\mathcal{O}_l$, and also $\Lie(\mathcal{F}_1\cup\cdots\cup\mathcal{F}_k)=\Lie(\mathcal{F}_1)\oplus\cdots\oplus\Lie(\mathcal{F}_k)=\Lie(\mathcal{F})$. Moreover, because $\mathcal{O}_l,l=1,\dots,k$ are disjoint, $\~{\iota}(\mathcal{F}_1\cup\cdots\cup\mathcal{F}_k)=[\s_1]*\cdots*[\s_k]=[\s_1*\cdots*\s_k]=[\s_1\cdots\s_k]$, which is the equivalent class of permutations with nontrivial orbits $\mathcal{O}_1,\dots,\mathcal{O}_k$. If $\mathcal{F}_1\cup\cdots\cup\mathcal{F}_k=\mathcal{F}$, then the proof is done. Otherwise, for any $\O_{ab}\in\mathcal{F}\backslash(\mathcal{F}_1\cup\cdots\cup\mathcal{F}_k)$, $\O_{ab}\in\Lie(\mathcal{F})=\Lie(\mathcal{F}_1)\oplus\cdots\oplus\Lie(\mathcal{F}_k)$, because $\mathcal{F}\subseteq\Lie(\mathcal{F})$. This implies that $\O_{ab}\in\Lie(\mathcal{F}_l)$ for some $l\in\{1,\dots,k\}$, and thus $\{a,b\}\subseteq\mathcal{O}_l$.
%implies that there exists a unique $l\in\{1,\dots,k\}$ such that $\O_{ab}\in\Lie(\mathcal{F}_l)$, and thus $\{a,b\}\subseteq\mathcal{O}_l$. 
Consequently, by the commutativity and associativity of the binary operation $*$ on $S_n/\sim$, $[(a,b)]*\~{\iota}(\mathcal{F}_1\cup\cdots\cup\mathcal{F}_k)=[(a,b)]*[\s_1]*\cdots*[\s_k]=[\s_1]*\cdots*([(a,b)]*[\s_l])*\cdots*[\s_k]=[\s_1]*\cdots*([(a,b)*\s_l])*\cdots*[\s_k]=[\s_1]*\cdots*[\s_l]*\cdots*[\s_k]=\~{\iota}(\mathcal{F}_1\cup\cdots\cup\mathcal{F}_k)$. Because $\O_{ab}\in\mathcal{F}\backslash(\mathcal{F}_1\cup\cdots\cup\mathcal{F}_k)$ was arbitrary, we conclude $\~{\iota}(\mathcal{F})=\~{\iota}(\mathcal{F}\backslash(\mathcal{F}_1\cup\cdots\cup\mathcal{F}_k))*\~{\iota}(\mathcal{F}_1\cup\cdots\cup\mathcal{F}_k)=\~{\iota}(\mathcal{F}_1\cup\cdots\cup\mathcal{F}_k)=[\s_1\cdots\s_k]$.

(Sufficiency) Let $M\subseteq {\rm SO}(n)$ denote the controllable submanifold of the system in \eqref{eq:son}. Then, the LARC implies $T_IM=\Lie(\F)$. %\cb{From Corollary \ref{cor:son},} any element $\O_{ab}\in\mathcal{B}$ such that $\{a,b\}\subseteq\mathcal{O}_i$ implies $\O_{ab}\in\Lie(\F)$,
Together with Corollary \ref{cor:son} that any element $\O_{ab}\in\mathcal{B}$ such that $\{a,b\}\subseteq\mathcal{O}_i$ implies $\O_{ab}\in\Lie(\F)$, we have $\Delta=\Delta_1\oplus\cdots\oplus\Delta_k\subseteq{\rm Lie}(\mathcal{F})$. To complete the proof, we %What remains to show is
will show $\Lie(\mathcal{F})\subseteq\Delta$, and equivalently, we will prove that if $\O_{ab}\in\mathcal{B}$ with $\{a,b\}\nsubseteq\mathcal{O}_i$ for any $i=1,\dots,k$, i.e., $\O_{ab}\not\in\Delta$, then $\O_{ab}\notin\text{Lie}(\F)$. Because the index set of $\mathcal{F}$ is $\{i_1,j_1\dots,i_m,j_m\}=\mathcal{O}_1\cup\cdots\cup\mathcal{O}_k$, $k\leq m$, if $a\not\in\mathcal{O}_1\cup\cdots\cup\mathcal{O}_k$ or $b\not\in\mathcal{O}_1\cup\cdots\cup\mathcal{O}_k$, then $\O_{ab}\not\in\Lie(\F)$. 
Hence, what remains to show is the case in which $a\in\mathcal{O}_i$ and $b\in\mathcal{O}_j$, for some $i\neq j$, does not occur. %, which will be proved by contradiction as follows.
Suppose that there exists $\O_{ab}\in\Lie(\mathcal{F})$ with $a\in\mathcal{O}_i$ and $b\in\mathcal{O}_j$ for some $i\neq j$, then we have $[(a,b)]*\~{\iota}(\mathcal{F})=\~{\iota}(\mathcal{F})$ by Remark \ref{rmk:redundant}. However, because $\{a,b\}\not\subseteq\mathcal{O}_i$ and $\{a,b\}\not\subseteq\mathcal{O}_j$, $[(a,b)]*[\s_i]*[\s_j]=[\s_i]*[(a,b)]*[\s_j]=[\s_i*(a,b)*\s_j]=[\s_i\cdot(a,b)\cdot\s_j]$ is the equivalent class of cycles with nontrivial orbit $\mathcal{O}_i\cup\mathcal{O}_j$ due to the fact that $a\in\mathcal{O}_i$ and $b\in\mathcal{O}_j$. But $\mathcal{O}_i\cup\mathcal{O}_j$ cannot be a nontrivial orbit of $\~{\iota}(\mathcal{F})$ by the disjointedness assumption of $\mathcal{O}_1,\dots,\mathcal{O}_k$, and thus $[(a,b)]*\~{\iota}(\mathcal{F})\neq\~{\iota}(\mathcal{F})$ which is a contradiction. 
\end{proof}

% =================================================
\subsection{Systems with Drift and Governed by Nonstandard Basis Vector Fields}
\label{sec:drift}
It is straightforward to realize that the method of examining controllability over the monoid $(S_n/\sim,*)$ also works for systems defined on SO$(n)$ with drift of the form
\begin{align}
	\label{eq:standard_drift}
	\dot{X}(t)=\left[\O_{i_0j_0}+\sum_{k=1}^ku_k(t)\O_{i_kj_k}\right]X, \quad X(0)=I,
\end{align}
because SO$(n)$ is a compact manifold so that the LARC is applicable \cite{Elliott09}. Specifically, the system in \eqref{eq:standard_drift} is controllable on SO$(n)$ if and only if $\~{\iota}(\F)=[(1,\dots,n)]$, where $\F=\{\O_{i_0j_0},\dots,\O_{i_mj_m}\}\subseteq\B$. In addition, if the system is not completely controllable, its controllable submanifold is the integral manifold of the involutive distribution $\Delta=\Delta_1\oplus\cdots\oplus\Delta_k$ if and only if $\~{\iota}(\mathcal{F})=[\s_1\cdot\s_2\cdots\s_k]$, where $\Delta_l={\rm span}\{\O_{ij}X:i,j\in\mathcal{O}_l\}$, $\s_l\in S_n$ are disjoint cycles, and $\mathcal{O}_l$ denote the nontrivial orbit of $\s_l$ for $l=1,\dots,k$.

Moreover, we would like to comment on the case in which the vector fields of the system in \eqref{eq:standard_drift} consist of nonstandard bases of $\mathfrak{so}(n)$. A general model system is of the form
\begin{eqnarray}
	\label{eq:nonstandard}
	\dot{X}(t)=B_0X(t)+\sum_{k=1}^mu_kB_kX(t), \quad X(0)=I,
\end{eqnarray}
where $X(t)\in\text{SO}(n)$ and $B_i\in\mathfrak{so}(n)$ for $i=0,1,\ldots,m$. The methodology and results about analyzing controllability over the monoid $(S_n/\sim,*)$ in terms of permutation cycles presented above in Section \ref{sec:*} remain applicable to the system in \eqref{eq:nonstandard}, if $\Lie(\{B_0,\dots,B_m\})\not\subseteq\mathfrak{so}(4)\oplus\cdots\oplus\mathfrak{so}(4)$, namely, the Lie algebra generated by its drift and control vector fields is not a Lie subalgebra of a direct sum of $\mathfrak{so}(4)$. This conjecture remains to be proved rigorously and is beyond the scope of this work. It is left for future investigations. 

% Examples can be established to illustrate the
To shed light on this observation, let's consider the system defined on SO(4), given by
\begin{align}
	\label{eq:non_standard_basis}
	\dot{X}(t)=\big[u_1(\O_{12}+\O_{34})+u_2\O_{23}\big]X(t), \quad X(0)=I,
\end{align}
where the set of control vector fields is $\mathcal{G}=\{\O_{12}+\O_{34},\O_{23}\}$. It is easy to verify by the LARC that the system in \eqref{eq:non_standard_basis} is not controllable on SO(4), because $\Lie(\mathcal{G})={\rm span}\{\O_{12}+\O_{34},\O_{13}-\O_{24},\O_{14}+\O_{23},\O_{23}\}={\rm span}\{\O_{12}+\O_{34},\O_{13}-\O_{24},\O_{14}+\O_{23}\}\oplus{\rm span}\{\O_{23}\}$ is a proper Lie subalgebra of $\mathfrak{so}(4)$ isomorphic to $\mathfrak{so}(3)\oplus\mathfrak{so}(2)$. However, if in this case we take $\F=\{\O_{12},\O_{34},\O_{23}\}$, then the resulting equivalence class of permutations is $\~\iota(\F)=[(1,2)]*[(3,4)]*[(2,3)]=[(1,2,3,4)]$, which, by Theorem \ref{thm:*operation}, implies controllability of the system, yielding a contradiction. The failure of the application of Theorem \ref{thm:*operation} in this case is caused by the fact that the Lie algebra $\mathfrak{so}(4)$, which is isomorphic to $\mathfrak{so}(3)\oplus\mathfrak{so}(3)$, is not simple.

However, when an additional control input is available for the system in \eqref{eq:non_standard_basis}, for instance, 
\begin{align*}
	% \label{eq:non_standard_basis_2}
	\dot{X}(t)=\big[u_1(\O_{12}+\O_{34})+u_2\O_{23}+u_3\O_{12}\big]X(t), \ X(0)=I,
\end{align*}
then the system is controllable by the LARC, since $\Lie(\mathcal{G}')=\mathfrak{so}(4)$, where $\mathcal{G}'=\{\O_{12}+\O_{34},\O_{23},\O_{12}\}$. From the symmetric group point of view, the permutation elements resulting from $\iota(\mathcal{G})$ and $\iota(\mathcal{G}')$ are $\overline{\mathcal{G}}=\{(1,2)(3,4),(2,3)\}$ and $\overline{\mathcal{G}'}=\{(1,2)(3,4),(2,3),(1,2)\}$, respectively, if we extend the domain of $\iota$ from $\mathcal{P}(\B)$ to $\mathcal{P}(\mathfrak{so}(4))$ and define $\iota(\O_{12}+\O_{34})=(1,2)(3,4)$. Then, it can be computed that the group generated by $\overline{\mathcal{G}}$ is $D_8$, the dihedral group of order 8, which is a proper subgroup of $S_4$; however, $\overline{\mathcal{G'}}$ generates the entire group $S_4$. Motivated by this observation, we assert that the system in \eqref{eq:nonstandard} is controllable on SO$(n)$ if the subgroup generated  by $\{\s_0,\dots,\s_m\}$ is $S_n$, where $\s_k=\iota(B_k)$, $k=0,1,\dots,m$.

% ======================== Remark 5 ===================================
\begin{remark}[\textbf{Bilinear Systems Induced by Group Actions}] 
	\rm
	% It is also common to study the action of a system defined on SO$(n)$ on $\mathbb{R}^n$, i.e., a system defined on $\mathbb{R}^n$ in the form of
	Another perspective to analyze systems defined on Lie groups is through the study of their group actions. For example, the action of the system on SO$(n)$ in \eqref{eq:son} on $\mathbb{R}^n$ is of the form
	\begin{align}
		\label{eq:son_action}
		\dot{x}(t)=\O_{i_0j_0}x(t)+\sum_{k=1}^mu_k\O_{i_kj_k}x(t), \quad x(0)=x_0,
	\end{align}
	where $x(t)\in\mathbb{R}^n$ for all $t\geq0$. As we know, the orbits of the Lie group action of SO$(n)$ on $\mathbb{R}^n$ are the origin and the spheres centered at the origin, and therefore the state space of the system in \eqref{eq:son_action} is $\mathbb{S}^{n-1}_{\|x_0\|}=\{z\in\mathbb{R}^n:\|z\|=\|x_0\|, x_0\neq 0\}$, the $n-1$ dimensional sphere in $\mathbb{R}^n$ centered at the origin with radius $\|x_0\|$. The framework of examining controllability over the monoid $(S/\sim,*)$ is also applicable to this bilinear system in $\mathbb{R}^n$. 
	Let $e_k$ denote the $k^{\rm th}$ standard basis of $\mathbb{R}^n$, then we have $\O_{ij}e_k=\delta_{ik}e_i-\delta_{jk}e_j$, or equivalently, 
$$\O_{ij}e_k=\begin{cases} e_j, & i=k, \\ -e_i, & j=k, \\ 0, & \text{otherwise}. \end{cases}$$
This relation illustrates that $\O_{ij}$ serves as a bridge transferring the state between $e_i$ and $e_j$. Following this idea, the system in \eqref{eq:son_action} is controllable on $\mathbb{S}^{n-1}_{\|x_0\|}$ if and only if $\~\iota(\F)=[(1,\dots,n)]$, i.e., $\Lie(\F)$ contains bridges between any two standard bases of $\mathbb{R}^n$ so that any state $x(t)$ in $\mathbb{R}^n$ can be reached, where $\F=\{\O_{i_0j_0},\dots,\O_{i_mj_m}\}$. %is the set of control and drift vector fields.
\end{remark}

%%%%%%%%%%%%%%%%%%%%%%%%%%%%%%%%%%%%%%%%%%%%%%%%%%%%%%%%%%%
\section{Bilinear Systems Defined on Undirected Graphs}
\label{sec:graph}
The new notion of mapping Lie brackets to permutations developed in Sections \ref{sec:SO(n)_controllability} and \ref{sec:controllability_monoid} can be directly applied to analyze controllability of broader classes of control systems, ncluding multi-agent systems, symmetric Markov chains, and quantum networks \cite{Zhang15}, and can be generalized to examine systems defined on compact, connected Lie groups with semisimple Lie algebras, such as SU$(n)$.

%=================================================
\subsection{Formation Control of Multi-agent Systems}
\label{sec:multiagent}
A generic question about formation control is concerned with how interconnected agents communicate and cooperate in a centralized or decentralized fashion towards a consensus %common goal 
\cite{Helmke13, Belabbas13}. The formation and, moreover, path controllability of multi-agent systems are conventionally analyzed based on the LARC \cite{Chen14}. Here, we study formation controllability by analyzing it over symmetric groups.

% \cb{
% Specifically, we consider the motion of a network of $N$ agents in $\mathbb{R}^n$ with the dynamics described by \cite{Chen14}
% \begin{align}
% 	\label{eq:multi_agent_ith_agent}
% 	\dot{x}_i=\sum_{j\in V(i)}u_{ij}(x_j-x_i),\quad i=1,\dots,N,
% \end{align}
% where $x_i(t)\in\mathbb{R}^n$, $V(i)=\{j\in V:(i,j)\in E\}$ is the set of vertices that are adjacent to agent $i$ and $u_{ij}=u_{ji}$ are piecewise constant controls determining the reciprocal interaction between $x_i$ and $x_j$.
% }

Specifically, we consider the motion of a network of $N$ agents in $\mathbb{R}^n$. Let $\Gamma=(V,E)$ be an undirected graph associated with this multi-agent system with the set of vertices $V=\{1,\dots,N\}$ and the set of edges $E$. Each agent follows the dynamic law  \cite{Chen14},
\begin{align}
	\label{eq:multi_agent_ith_agent}
	\dot{x}_i=\sum_{j\in V(i)}u_{ij}(x_j-x_i),\quad i=1,\dots,N,
\end{align}
where $x_i(t)\in\mathbb{R}^n$, $V(i)=\{j\in V:(i,j)\in E\}$ is the set of vertices that are adjacent to agent $i$, and $u_{ij}=u_{ji}$ are piecewise constant controls determining the reciprocal interaction between $x_i$ and $x_j$. The nondegenrate configuration space (state space) of this system, denoted by 
$\mathbb{P}$, contains points $(x_1,\dots,x_N)\in(\mathbb{R}^n)^N$ such that $\sum_{i=1}^Nx_i=0$, $x_i\neq x_j$ for all $i\neq j$, and ${\rm span}\{x_1-x_2,\dots,x_1-x_N\}=\mathbb{R}^n$ \cite{Chen14}.

Let $e_i$ denote the $i^{th}$ standard basis of $\mathbb{R}^N$, $i=1,\dots,N$, and define the matrix $A_{ij}\in\mathbb{R}^{N\times N}$ by $A_{ij}=e_ie_j'+e_je_i'-e_ie_i'-e_je_j'$, which are symmetric with zero row and column sum. In addition, let $X\in\mathbb{R}^{N\times n}$ be a matrix whose $i^{th}$ row defines the state vector of agent $i$, i.e., %each agent $x_i$ as its rows, i.e.,
$$X=\left[\begin{array}{c} x_1' \\ \hline\vdots \\ \hline x_N' \end{array}\right],$$
then, by \eqref{eq:multi_agent_ith_agent}, the system of $N$ agents follows the dynamic equation
\begin{align}
	\label{eq:multi_agent}
	\dot{X}=\sum_{(i,j)\in E}u_{ij}A_{ij}X, \quad X(0)=X_0.
\end{align}
Moreover, for any $i,j,k=1,\dots,N$, we define an antisymmetric matrix $B_{ijk}\in\mathbb{R}^{N\times N}$ by $B_{ijk}=(e_ie_k'-e_ke_i')-(e_ie_j'-e_je_i')-(e_je_k'-e_ke_j')$, and thus each $B_{ijk}$ has zero row and column sum as well. Define the sets $\mathbb{A}=\{A_{ij}:i,j=1,\dots,N\}$, $\mathbb{B}=\{B_{ijk}:i,j,k=1,\dots,N\}$, and the real vector spaces $\mathfrak{g}_1={\rm span}(\mathbb{A})$ and $\mathfrak{g}_2={\rm span}(\mathbb{B})$, then the direct sum
\begin{align}
  \label{eq:g}
  \mathfrak{g}=\mathfrak{g}_1\oplus\mathfrak{g}_2
\end{align}
defines the set of matrices whose rows and columns sum to zero.

%=========================  Lemma 4 =======================
\begin{lemma}
	$\mathfrak{g}$ is an $(N-1)^2$ dimensional Lie algebra.
\end{lemma}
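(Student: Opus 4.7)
The plan is to identify $\mathfrak{g}$ with the space
\begin{align*}
V=\{M\in\mathbb{R}^{N\times N}:M\mathbf{1}=0,\ \mathbf{1}^{\top}M=0\}
\end{align*}
of matrices whose rows and columns sum to zero, and then do three things in turn: (i) check $\mathfrak{g}_1+\mathfrak{g}_2$ really equals $V$ and that the sum is direct; (ii) count $\dim V$; (iii) verify that $V$ is closed under the matrix commutator. First I would observe that every $A_{ij}$ is symmetric and every $B_{ijk}$ is antisymmetric, so $\mathfrak{g}_1\cap\mathfrak{g}_2=\{0\}$ automatically, and that a direct inspection of each generator shows $A_{ij}\mathbf{1}=\mathbf{1}^{\top}A_{ij}=0$ and $B_{ijk}\mathbf{1}=\mathbf{1}^{\top}B_{ijk}=0$; so $\mathfrak{g}\subseteq V$.

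For closure, I would use the abstract observation that $V$ is a Lie subalgebra of $\mathfrak{gl}_N(\mathbb{R})$: if $M,N\in V$, then $[M,N]\mathbf{1}=M(N\mathbf{1})-N(M\mathbf{1})=0$ and $\mathbf{1}^{\top}[M,N]=(\mathbf{1}^{\top}M)N-(\mathbf{1}^{\top}N)M=0$. Hence once the equality $\mathfrak{g}=V$ is in hand, closure under $[\cdot,\cdot]$ is immediate. The dimension count is equally straightforward: $V$ is cut out of $\mathbb{R}^{N\times N}$ by $N$ row-sum equations and $N$ column-sum equations, but their sums agree (both give the total of all entries), yielding $2N-1$ independent linear constraints and $\dim V=N^{2}-(2N-1)=(N-1)^{2}$.

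The real work is therefore to show $\mathfrak{g}_1=V^{\mathrm{sym}}$ and $\mathfrak{g}_2=V^{\mathrm{asym}}$, where $V=V^{\mathrm{sym}}\oplus V^{\mathrm{asym}}$ is the decomposition of $V$ into its symmetric and antisymmetric parts. For $\mathfrak{g}_1$, I would note that $\dim V^{\mathrm{sym}}=N(N-1)/2$ (a symmetric matrix with all row sums zero has its diagonal determined by its off-diagonal entries, leaving $N(N-1)/2$ free parameters) and that the $A_{ij}$ with $i<j$ are linearly independent because the $(i,j)$-entry of $\sum c_{ij}A_{ij}$ equals $c_{ij}$. Counting matches, so $\mathfrak{g}_1=V^{\mathrm{sym}}$. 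For $\mathfrak{g}_2$, I would rewrite $B_{ijk}=\Omega_{ik}-\Omega_{ij}-\Omega_{jk}$ in the standard antisymmetric basis $\Omega_{ab}=e_ae_b^{\top}-e_be_a^{\top}$ and then show that the anchored family $\{B_{1jk}:1<j<k\}$ is a basis of $V^{\mathrm{asym}}$. The cardinality $\binom{N-1}{2}=(N-1)(N-2)/2$ matches $\dim V^{\mathrm{asym}}$ (antisymmetric matrices have dimension $N(N-1)/2$, and the $N$ row-sum conditions drop out with one redundancy, since trace-zero is automatic, giving $N-1$ independent constraints). Independence follows by reading off the coefficient of $\Omega_{1k}$.

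The step I expect to be the main obstacle is precisely this spanning claim for $\mathfrak{g}_2$. The generators $B_{ijk}$ over all distinct triples are highly redundant: a direct computation gives the cocycle identity $B_{ijk}+B_{ikl}=B_{ijl}+B_{jkl}$ for any four distinct indices, so a naive parametrization overcounts. Choosing the anchored subfamily $\{B_{1jk}\}$ as a candidate basis sidesteps the redundancy, but one still has to check that every $B_{ijk}$ with $i\neq 1$ is a $\mathbb{Z}$-combination of anchored ones; this is a short induction on the number of non-anchor indices using the cocycle relation. Once this is verified, the theorem follows by assembling $\dim\mathfrak{g}=\dim\mathfrak{g}_1+\dim\mathfrak{g}_2=N(N-1)/2+(N-1)(N-2)/2=(N-1)^{2}$, combined with the bracket closure of $V$.
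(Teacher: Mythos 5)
Your argument is correct, but it takes a genuinely different route from the paper's. The paper never identifies $\mathfrak{g}$ with the full space $V$ of zero row- and column-sum matrices as part of the proof; it computes $\dim\mathfrak{g}_1=N(N-1)/2$ and $\dim\mathfrak{g}_2=(N-1)(N-2)/2$ from the linear independence of $\mathbb{A}$ and of the anchored family $\{B_{1jk}\}$, adds them, and then establishes closure by a case analysis on symmetry type, invoking $[\mathfrak{g}_1,\mathfrak{g}_1]\subseteq\mathfrak{g}_2$, $[\mathfrak{g}_2,\mathfrak{g}_2]\subseteq\mathfrak{g}_2$, $[\mathfrak{g}_1,\mathfrak{g}_2]\subseteq\mathfrak{g}_1$ together with the explicit bracket identities among the $A_{ij}$ and $B_{ijk}$. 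Your closure argument is more elementary and arguably tighter: since $[M,N]\mathbf{1}=0$ and $\mathbf{1}^{\top}[M,N]=0$ whenever $M,N\in V$, the space $V$ is a Lie subalgebra of $\mathfrak{gl}_N(\mathbb{R})$ for free, and closure of $\mathfrak{g}$ follows once $\mathfrak{g}=V$ is established by the dimension count $\dim V=N^2-(2N-1)=(N-1)^2=\dim\mathfrak{g}_1+\dim\mathfrak{g}_2$. This sidesteps the need to know that the bracket of two generators lands specifically in $\mathrm{span}(\mathbb{B})$ rather than merely in the antisymmetric matrices, a point the paper's symmetry-type argument leaves implicit; what the paper's version buys instead is the explicit graded structure $[\mathfrak{g}_1,\mathfrak{g}_1]\subseteq\mathfrak{g}_2$, which is reused later (e.g.\ in \eqref{eq:Lie_Aij_1}--\eqref{eq:Lie_Aij_2} and Corollary \ref{cor:multiagent}). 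Your treatment of $\mathfrak{g}_2$ via the cocycle identity $B_{ijk}+B_{ikl}=B_{ijl}+B_{jkl}$ is sound and in fact supplies the spanning argument the paper only asserts. One small slip: linear independence of $\{B_{1jk}:1<j<k\}$ is read off from the coefficient of $\Omega_{jk}$ (which is $-c_{jk}$), not of $\Omega_{1k}$, whose coefficient in $\sum c_{jk}B_{1jk}$ is a nontrivial signed sum of several $c$'s; this is trivially repaired and does not affect the proof.
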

\begin{proof}
Since the elements in $\mathbb{A}$ are linearly independent, they form a basis of $\mathfrak{g}_1$. Therefore, the dimension of $\mathfrak{g}_1$ is $\frac{1}{2}N(N-1)$, which is the cardinality of $\mathbb{A}$. Similarly, the elements in the set $\{B_{1jk}:j,k=1,\dots,N\}$ form a basis of $\mathfrak{g}_2$ with dimension $(N-1)(N-2)/2$. As a direct sum of two vector spaces, $\mathfrak{g}$ is also a vector space and its dimension is the sum of the dimension of $\mathfrak{g}_1$ and $\mathfrak{g}_2$, that is, $\dim(\mathfrak{g})=N(N-1)/2+(N-1)(N-2)/2=(N-1)^2$.

Moreover, the symmetry and antisymmetry of $\mathfrak{g}_1$ and $\mathfrak{g}_2$, respectively, %$\mathfrak{g}_1$ and $\mathfrak{g}_2$ satisfy
yield the relations $[\mathfrak{g}_1,\mathfrak{g}_1]\subseteq\mathfrak{g}_2$, $[\mathfrak{g}_2,\mathfrak{g}_2]\subseteq\mathfrak{g}_2$, and $[\mathfrak{g}_1,\mathfrak{g}_2]\subseteq\mathfrak{g}_1$, where $[\cdot,\cdot]$ denotes the Lie bracket. Because any elements $M,N\in\mathfrak{g}$ can be decomposed into $M=M_1+M_2$ and $N=N_1+N_2$ with $M_1,N_1\in\mathfrak{g}_1$ and $M_2,N_2\in\mathfrak{g}_2$, we obtain $[M,N]=[M_1+M_2,N_1+N_2]=[M_1,N_1]+[M_1,N_2]+[M_2,N_1]+[M_2,N_2]\in\mathfrak{g}$, since $[M_1,N_1],[M_2,N_2]\in\mathfrak{g}_2$ and $[M_1,N_2],[M_2,N_1]\in\mathfrak{g}_1$. Therefore, $\mathfrak{g}$ is closed under the Lie bracket operation and hence is a Lie algebra, because $M$ and $N$ are arbitrary.
\end{proof}

Specifically, we have the following Lie bracket relations:
\begin{align}
	\label{eq:Lie_Aij_1}
	&[A_{ij},A_{jk}]=[A_{jk},A_{ik}]=[A_{ik},A_{ij}]=B_{ijk}, \\
	\label{eq:Lie_Aij_2}
	&[A_{ij},B_{ijk}]=2(A_{jk}-A_{ik}), \\
	&[A_{jk},B_{ijk}]=2(A_{ik}-A_{ij}), \nonumber \\
	&[A_{ik},B_{ijk}]=2(A_{ij}-A_{jk}) \nonumber,	
\end{align}
and $[A_{ij},A_{kl}]\neq0$ when $i=k$, $i=l$, $j=k$, or $j=l$. By \eqref{eq:Lie_Aij_1}, it is evident that $B_{ijk}\in\Lie(\{A_{ij},A_{jk}\})$. In addition, using \eqref{eq:Lie_Aij_2}, together with \eqref{eq:Lie_Aij_1}, gives $A_{ik}=(2A_{jk}-[A_{ij},[A_{ij},A_{jk}]])/2$, which implies that $A_{ik}\in\Lie(\{A_{ij},A_{jk}\})$. Moreover, $B_{\s(i)\s(j)\s(k)}=(-1)^{\s}B_{ijk}$ and thus $B_{\s(i)\s(j)\s(k)}\in\Lie(\{A_{ij},A_{jk}\})$ for any $\s\in S_3$, a permutation on three letters. Given this with \eqref{eq:Lie_Aij_1} and \eqref{eq:Lie_Aij_2}, we observe that all iterated Lie brackets of $\{A_{ij},A_{jk}\}$ are linear combinations of $\{A_{ij},A_{jk}, A_{ik},B_{ijk}\}$, which implies $\Lie(\{A_{ij},A_{jk}\})={\rm span}\{A_{ij},A_{jk}, A_{ik},B_{ijk}\}$.

% ========================== Theorem 5 =======================
\begin{theorem}
	\label{thm:multiagent}
The multi-agent system in \eqref{eq:multi_agent} is controllable on $\mathbb{P}$ %$(\Delta^{n-1})^N$ 
  if and only if $\~{\iota}(\F)=[(1,\dots,N)]$, where $\F=\{A_{ij}:(i,j)\in E\}$. 
\end{theorem}
\begin{proof}
Observe that all iterated Lie brackets of any two vector fields in $\F$, e.g., $\{A_{ij},A_{jk}\}$, are linear combinations of some elements in $\{A_{ij},A_{jk}, A_{ik},B_{ijk}\}$, which implies $\Lie(\{A_{ij},A_{jk}\})={\rm span}\{A_{ij},A_{jk}, A_{ik},B_{ijk}\}$. In addition, by the definition of $\tilde\iota$, we have $\tilde{\iota}(\{A_{ij},A_{jk}\})=[(i,j)*(j,k)]=[(i,j,k)]$. Then, following a similar proof of Theorem \ref{thm:*operation}, it can be shown that $\tilde{\iota}(\F)=[(1,\dots,N)]$ if and only if $A_{ij}\in\Lie(\F)$ for all $i,j\in\{1,\dots,N\}$. In this case, the property $\Lie(\{A_{ij},A_{jk}\})={\rm span}\{A_{ij},A_{jk}, A_{ik},B_{ijk}\}$ further implies $B_{ijk}\in\Lie(\F)$ for all $i,j,k\in\{1,\dots,N\}$, and hence $\Lie(\F)={\rm span}\{A_{ij},B_{ijk}:1\leq i,j,k\leq N\}=\mathfrak{g}$, the Lie algebra defined in \eqref{eq:g}. This concludes by the LARC that the system in \eqref{eq:multi_agent} is controllable if and only if $\tilde{\iota}(\F)=[(1,\dots,N)]$.
\end{proof}

% ========================== Corollary 3 =======================
\begin{corollary}
	\label{cor:multiagent}
	The controllable submanifold of the system in \eqref{eq:multi_agent} is the integral manifold of the involutive distribution $\Delta=\Delta_1\oplus\cdots\oplus\Delta_l$ containing $X_0$, where $\Delta_r={\rm span}\{A_{ij}X,B_{ijk}X:i,j,k\in\mathcal{O}_r\}$, if and only if $\~\iota(\mathcal{F})=[\s_1\cdots\s_l]$, where $\s_l\in S_n$ are disjoint cycles and $\mathcal{O}_r$ denotes the largest orbit of $\s_r$ for $r=1,\dots,l$.
\end{corollary}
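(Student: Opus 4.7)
The plan is to adapt the argument of Corollary \ref{cor:*operation} to the multi-agent setting, with additional care that the Lie algebra $\mathfrak{g}$ is generated by two families of matrices, $\{A_{ij}\}$ and $\{B_{ijk}\}$, rather than a single standard basis. The key enabling observation is that, by the bracket relations \eqref{eq:Lie_Aij_1} and \eqref{eq:Lie_Aij_2}, we have $\Lie(\{A_{ij},A_{jk}\}) = \mathrm{span}\{A_{ij},A_{jk},A_{ik},B_{ijk}\}$, so each $B_{ijk}$ is entirely determined by the presence of two $A$-generators whose index sets share the vertex $j$. Consequently, only the $A$-generators must be tracked under $\tilde\iota$, and the $B$-elements are carried along by the orbit structure of the resulting permutations.

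For the necessity direction, suppose $\Delta=\Delta_1\oplus\cdots\oplus\Delta_k$ is the distribution whose integral manifold is the controllable submanifold. I would partition $\mathcal{F}$ by letting $\mathcal{F}_l=\{A_{ij}\in\mathcal{F}:\{i,j\}\subseteq\mathcal{O}_l\}$. Applying Theorem \ref{thm:multiagent} to the sub-system supported on the index set $\mathcal{O}_l$ yields $\Lie(\mathcal{F}_l)=\Delta_l$ and $\tilde\iota(\mathcal{F}_l)=[\sigma_l]$, where $\sigma_l$ is a cycle with nontrivial orbit $\mathcal{O}_l$. Using the commutativity of $*$ on $S_n/\!\sim$ together with the pairwise disjointness of the orbits $\mathcal{O}_l$, we obtain $\tilde\iota(\bigcup_l\mathcal{F}_l)=[\sigma_1\cdots\sigma_k]$. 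Any leftover generator $A_{ab}\in\mathcal{F}\setminus\bigcup_l\mathcal{F}_l$ must satisfy $A_{ab}X\in\Delta$, forcing $\{a,b\}\subseteq\mathcal{O}_l$ for some $l$; such elements are redundant in the sense of Remark \ref{rmk:redundant}, so $\tilde\iota(\mathcal{F})=[\sigma_1\cdots\sigma_k]$.

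For the sufficiency direction, I would first establish $\Delta\subseteq\Lie(\mathcal{F})$ by the same partition: $\tilde\iota(\mathcal{F}_l)=[\sigma_l]$ forces $\Lie(\mathcal{F}_l)=\Delta_l$, and taking the direct sum yields $\Delta\subseteq\Lie(\mathcal{F})$. For the reverse inclusion, I would argue by contradiction in the spirit of the sufficiency portion of Corollary \ref{cor:*operation}: if some $A_{ab}\in\Lie(\mathcal{F})$ had $a\in\mathcal{O}_i$ and $b\in\mathcal{O}_j$ with $i\neq j$, then by the redundancy calculation $[(a,b)]*\tilde\iota(\mathcal{F})=\tilde\iota(\mathcal{F})$, whereas a direct computation gives $[(a,b)]*[\sigma_i]*[\sigma_j]=[\sigma_i\cdot(a,b)\cdot\sigma_j]$, a cycle with nontrivial orbit $\mathcal{O}_i\cup\mathcal{O}_j$, contradicting the disjointness of the $\mathcal{O}_l$. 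The parallel argument for a stray $B_{abc}$ with indices in distinct orbits follows by writing $B_{abc}=[A_{ab},A_{bc}]$ and reducing to the case of $A$-bridging. Applying the Frobenius theorem to the resulting involutive distribution $\Delta$ identifies the controllable submanifold as the maximal leaf through $X(0)$.

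The principal obstacle I anticipate is the rigorous propagation of the orbit structure through Lie brackets involving the $B_{ijk}$'s: unlike in Corollary \ref{cor:*operation}, where every generator $\Omega_{ij}$ maps cleanly to a transposition under $\iota$, here the $B_{ijk}$ lie outside the natural domain of $\tilde\iota$, so one must carefully use \eqref{eq:Lie_Aij_1}--\eqref{eq:Lie_Aij_2} to express any $B_{abc}\in\Lie(\mathcal{F})$ as an iterated bracket of $A$-generators whose indices remain confined to a single orbit $\mathcal{O}_l$, thereby ensuring that the orbit decomposition is preserved throughout the derivation.
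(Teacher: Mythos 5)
Your proposal is correct and takes essentially the same route as the paper, whose entire proof of this corollary is the single remark that it ``follows that of Corollary \ref{cor:*operation}.'' You have in fact supplied more detail than the paper does --- in particular the explicit partition $\mathcal{F}_l=\{A_{ij}\in\mathcal{F}:\{i,j\}\subseteq\mathcal{O}_l\}$ and the careful reduction of the $B_{ijk}$ generators to iterated brackets of $A$-generators via \eqref{eq:Lie_Aij_1}--\eqref{eq:Lie_Aij_2}, which is precisely the point the paper leaves implicit.
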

\begin{proof}
	The proof follows that of Corollary \ref{cor:*operation}.
\end{proof}

% ========================================================
\subsection{Control of Symmetric Markov Chains}
\label{sec:markov}
A natural class of stochastic systems defined on graphs are Markov chains, which have broad applications from web search and gene expressions to %protein structures to 
process control. Continuous-time finite-state (CTFS) Markov chains are widely used to model stochastic processes in these areas. Let's consider a CTFS
symmetric Markov chain $X(t)$ on the finite set $\mathbb{S}=\{1,\dots,n\}$, which is reversible with respect to the uniform probability measure $\pi_i=1/n$ for all $i\in \mathbb{S}$ \cite{Lawler06}. Let $p_k(t)$ be the probability of the chain at state $k$, i.e., $p_k(t)={\rm Prob}(X(t)=k)$ for $k=1,\ldots,n$. Then, the dynamics of $P(t)=(p_1(t),\dots,p_n(t))'$ follow
\begin{align}
	\label{eq:p_evolution}
	\dot{P}=MP, \quad P(0)=P_0,
\end{align}
where $P_0$ is the probability distribution of $X(0)$ and $M\in\mathbb{R}^{n\times n}$ is the intensity matrix, which is symmetric with zero sum rows and columns. %each row and each column summing to zero. 
Because this Markov chain is reversible with respect to $\pi$, we have
\begin{align}
\label{eq:invariant}
\sum_{j\neq i}M_{ij}\pi_j=\sum_{i\neq j}M_{ji}\pi_i=M_i\pi_i,
\end{align}
where $M_{ij}$, the $ij^{th}$ entry of $M$, denotes the transition rate from state $i$ to state $j$, and $M_i=\sum_{j\neq i}M_{ji}$ denotes the total rate at which the chain is changing from state $i$. In addition, \eqref{eq:invariant} also implies that $\pi$ is an invariant measure for $X(t)$. Therefore, with constant transition rate, the probability distribution $P(t)$ of the chain will eventually converge to the uniform distribution.  

Now, suppose that one can manipulate the rates $M_{ij}$, then the controlled dynamics of $P(t)$ in \eqref{eq:p_evolution} can be expressed as 
\begin{align}
	\label{eq:p_control}
	\dot{P}=\sum_{\{i,j: M_{ij}\neq0\}}u_{ij}A_{ij}P, \quad P(0)=P_0,
	% \dot{P}=\sum_{\cb{i,j\text{ such that }M_{ij}\neq0}}u_{ij}A_{ij}P, \quad P(0)=P_0,
\end{align}
where $A_{ij}=A_{ji}=e_ie_j'+e_je_i'-e_ie_i'-e_je_j'\in\mathbb{A}$. %with $e_i$ as the $i^{\rm th}$ standard basis of $\mathbb{R}^n$. 
Notice that the system in \eqref{eq:p_control} is of the same form as that %the system 
in \eqref{eq:multi_agent}, and, in particular, %more specifically, 
it represents the dynamics of an agent in the multi-agent system \eqref{eq:multi_agent}. Consequently, following Theorem \ref{thm:multiagent}, the system in \eqref{eq:p_control} is controllable on the $(n-1)$-simplex $\Delta^{n-1}=\{(p_1,\dots,p_n):\sum_{i=1}^np_i=1\}$ if and only if $\tilde\iota(\F)=[(1,\dots,n)]$, where $\F=\{A_{ij}:1\leq i,j\leq n,M_{ij}\neq0\}$ is the set of control vector fields. In this case, for any $i\in\{1,\dots,n\}$, there exists some $j\in\{1,\dots,n\}$, such that $A_{ij}\in\F$, or equivalently $M_{ij}\neq0$, which implies that the Markov chain $X(t)$ is irreducible.

However, if the Markov chain $X(t)$ is reducible and has $l$ nontrivial communication classes $\mathcal{O}_1,\dots,\mathcal{O}_l$, then we have $M_{ij}\neq0$ only if $\{i,j\}\subseteq\mathcal{O}_r$ for some $r=1,\dots,l$, where $l<n$. In this case, we obtain $\~{\iota}(\F)=[\s_1\cdots\s_l]$ such that $\s_r$ is a cycle with the nontrivial orbit $\mathcal{O}_r$ for each $r=1,\dots,l$. According to Corollary \ref{cor:multiagent}, the controllable submanifold of the system in \eqref{eq:p_control} is the integral manifold of the involutive distribution $\Delta=\Delta_1\oplus\cdots\oplus\Delta_l$ containing $P_0$, where $\Delta_r={\rm span}\{A_{ij}P,B_{ijk}P:i,j,k\in\mathcal{O}_r\}$. We also know that the set of the feasible probability distributions of the Markov chain $X(t)$ in \eqref{eq:p_control} is $D=\{(p_1,\dots,p_n)'\in \Delta^{n-1}:\sum_{i\in\mathcal{O}_r}p_{i}=\sum_{i\in\mathcal{O}_r}p_{i}(0){\ \rm for\ }r=1,\dots,l, {\ \rm and\ } p_j=p_j(0){\ \rm for\ }j\in\mathbb{S}\backslash(\bigcup_{r=1}^l\mathcal{O}_r)\}$. Hence, Corollary \ref{cor:multiagent} implies that $T_PD=\Delta_P$ for every point $P\in D$, where $D$ is a submanifold of $\Delta^{n-1}$ %(or we embed $D$ into $\Delta^{n-1}$) 
and $\Delta_P$ is the distribution evaluated at $P$.

\section{Interpretation and Visualization of Controllability over Graphs}
\label{sec:visualization}
Inspired by the idea of mapping controllability analysis to permutation compositions, %understanding controllability of bilinear systems through permutation compositions and the length of permutation cycles, 
this fundamental property can be interpreted and visualized by graphs. The system on SO$(n)$ in \eqref{eq:son}, the network of multiple agents in \eqref{eq:multi_agent}, and the Markov chain on a simplex in \eqref{eq:p_control} are control systems of the form
\begin{align}
	\label{eq:bilinear}
	\frac{d}{dt}X(t)=\sum_{k=1}^mu_i(t)H_{i_kj_k}X(t), \quad X(0)=X_0,
\end{align}
where $H_{i_kj_k}\in\mathcal{M}$ and $\mathcal{M}=\mathcal{B}$ or $\mathbb{A}$. %or $\mathbb{S}$. 
Associated with a system in \eqref{eq:bilinear}, one can define an undirected, unweighted graph $\Gamma=(V,E)$ according to the control vector fields, $\F=\{H_{i_1j_1},\dots,H_{i_mj_m}\}$, where $V=\{1,\dots,n\}$ %is the set of vertices 
and $E=\{(i,j):H_{ij}\in\F\}$ %is the set of edges that 
representing the indices of $\F$. Notice that for the multi-agent system and the symmetric Markov chain, $\Gamma$ is the graph describing the interactions of the agents and the transitions between the states in respective cases. %, respectively.

% according to its set of control vector fields, $\F=\{B_{i_1j_1},\dots,B_{i_mj_m}\}$, in \eqref{eq:bilinear}, namely, $V=\{1,\dots,n\}$ is the set of vertices and $E=\{(i,j):B_{ij}\in\F\}$ is the set of edges that represents the indices of $\F$. Notice that for the multi-agent system and the symmetric Markov chain, $\Gamma$ is the graph describing the interactions of the agents and the transitions between the states, respectively.

Recall that the system in \eqref{eq:bilinear} is controllable if $\~\iota(\F)=[(1,\dots,n)]$. In this case, for any $H_{ij}$, there exist $\pm H_{ik}$ and $\pm H_{kj}$ in $\F$ such that $H_{ij}$ can be generated by iterated Lie brackets of them. Translating the same idea to the graph $\Gamma=(V,E)$, this is equivalent to saying that for any $i,j\in V$, there exists $k\in V$ such that $(i,k), (j,k)\in E$, and hence the nodes $i,j,k$ are connected by a path {$i-k-j$. Then $\~\iota(\F)=[(1,\dots,n)]$ implies that $\Gamma=(V,E)$ is a connected graph. Moreover, we have also known that if $\~{\iota}(\mathcal{F})=[\s_1\cdot\s_2\cdots\s_l]$ for some disjoint cycles $\s_1,\s_2\cdots,\s_l$ with nontrivial orbits $\mathcal{O}_1,\dots,\mathcal{O}_l$, respectively, then the controllable submanifold of the system is the integral manifold of the involutive distribution $\Delta=\Delta_1\oplus\cdots\oplus\Delta_l$, where $\Delta_r={\rm span}\{\O_{ij}X:i,j\in\mathcal{O}_r\}$ if $\mathcal{M}=\B$ and $\Delta_r={\rm span}\{A_{ij}X, B_{ijk}X:i,j,k\in\mathcal{O}_r\}$ if $\mathcal{M}=\mathbb{A}$ for each $r=1,\ldots,l$. In this case, the graph $\Gamma$ has $l$ nontrivial connected components $\Gamma_1=(V_1,E_1),\dots,\Gamma_l=(V_l,E_l)$ with $V_r=\mathcal{O}_r$ for $r=1,\dots,l$. These observations illuminate the relationship between controllability of the bilinear system in \eqref{eq:bilinear} and the connectivity of the associated graph $\Gamma$, which are summarized in the following theorem and corollary.

\begin{theorem}
\label{thm:graph}
A bilinear system in the form of \eqref{eq:bilinear} is controllable if and only if its associated graph $\Gamma=(V,E)$ is connected, where $V=\{1,\dots,n\}$ and $E=\{(i_1,j_1),\dots,(i_m,j_m)\}$.
\end{theorem}

\begin{corollary}
\label{cor:graph}
Let $\Gamma=(V,E)$ be the graph associated to a bilinear system in the form of \eqref{eq:bilinear}. Then, the graph $\Gamma=(V,E)$ has $l$ nontrivial connected components $\Gamma_1=(V_1,E_1)$, $\dots$, $\Gamma_l=(V_l,E_l)$ if and only if  the controllable submanifold of the system is the integral manifold of the involutive distribution $\Delta=\Delta_1\oplus\cdots\oplus\Delta_l$ containing $X_0$, where $\Delta_r={\rm span}\{\O_{ij}X:i,j\in V_r\}$ if $\mathcal{M}=\B$ and $\Delta_r={\rm span}\{A_{ij}X,B_{ijk}X:i,j,k\in V_r\}$ if $\mathcal{M}=\mathbb{A}$ for each $r=1,\dots,l$.
\end{corollary}

% $\Gamma_1=(V_1,E_1),\dots,\Gamma_l=(V_l,E_l)$ with $V_k=\mathcal{O}_k$ for all $k=1,\dots,l$. These observations illuminate the relationship between controllability of the bilinear system in \eqref{eq:bilinear} %is characterized by the and the connectivity of the associated graph $\Gamma$.{

% ==================== Remark 6 =============
\begin{remark}[\textbf{Computational Complexity for Evaluating Controllability}]
\rm The application of LARC for analyzing controllability involves computing iterated Lie brackets and examining the linear independence of the resulting vector fields, e.g., by Gaussian elimination. The complexity of such computations is on the order of $O(n^3)$. In this newly developed framework, controllability of bilinear systems, governed by the vector fields that form a magma structure, can be analyzed in terms of the length of permutation orbits or connectivity of the graph associated with the system. Computationally, permutation multiplications are operated by multiplications of permutation matrices. These are sparse matrices, and the computational complexity of their multiplications can be reduced down to the order of $O(n^{2.73})$, e.g., by the Coppersmith-Winograd algorithm \cite{Coppersmith90}. In addition, the complexity of the algorithms for computing graph connectivity and identifying connected components is on the order of $O(n^2)$ (the worse case scenario) \cite{Tarjan72, Iverson15}. 
\end{remark}

% \cb{
% \begin{remark}[\textbf{Computing controllability of bilinear systems}]\rm
% In Examples \ref{ex:so(5)} and \ref{ex:so(5)_2}, two systems defined on SO$(5)$ are used to illustrate the high computational complexity of the application of LARC, which involves computing Lie brackets and checking linear independence (e.g., Gaussian elimination) with computational complexity both of $O(n^3)$. In the new framework established in this paper, analyzing controllability is carried out by computing permutation multiplications or checking graph connectivity. Computationally, permutation multiplications are generally calculated by permutation matrix multiplications. In addition, permutation matrices are sparse matrices, and the computational complexity of their multiplications can be reduced to $O(n^{2.73})$ by the Coppersmith-Winograd algorithm. On the other hand, algorithms for testing graph connectivity, as well as characterizing connected components, have worst-case complexity $O(n^2)$. 
% \end{remark}
% }

%%%%%%%%%%%%%%%%%%%%%%%%%%%%%%%%%%%%%%%%%%%%
\section{conclusion}
In this paper, we introduced a new algebraic approach for the characterization of controllability and identification of controllable submanifold for %time-invariant bilinear systems governed by the vector fields that form a magma structure under Lie bracket operations. 
time-invariant bilinear systems broadly defined on compact connected Lie groups and on undirected graphs, such as multi-agent systems or Markov chains. The key innovation was to establish a map from the Lie bracket operations over the control vector fields to permutation compositions on a symmetric group, so that the classical LARC can be translated to a condition in terms of permutation cycles. This new development further enabled a graph representation of controllability that involves interpreting controllability by the graph connectivity and characterizing controllable submanifold by nontrivial connected subgraphs. The established framework offered an alternative path, other than the LARC, to understand controllability and provided insight into the design of effective computational methods for computing controllability and identifying controllable submanifold through permutation operations and graph traversal algorithms.

%%%%%%%%%%%%%%%%%%%%%%%%%%%%%%%%%%%%%%%%%%%%%
\section{Appendix}
% \label{appd}

% ========================== Lemma 5 =======================
\begin{lemma}
	\label{lem:*orbit}
	For any cycles $\s,\eta\in S_n$ such that $|\mathcal{O}_{\s}\cap\mathcal{O}_{\eta}|\geq 1$, where $\mathcal{O}_{\s}$ and $\mathcal{O}_{\eta}$ denote the nontrivial orbits of $\s$ and $\eta$, respectively, then $\s*\eta\in[\pi]$ where $\pi$ is a cycle with nontrivial orbit $\mathcal{O}_{\s}\cup\mathcal{O}_{\eta}$. Consequently, the commutativity relation $\s*\eta\sim \eta*\s$ holds.
\end{lemma}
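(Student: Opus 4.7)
The plan is to prove the orbit structure of $\s*\eta$ by induction on $r=|\mathcal{O}_\s\cap\mathcal{O}_\eta|$, using the explicit computational rules for $*$ developed earlier in Section \ref{sec:monoid}. The commutativity claim $\s*\eta\sim\eta*\s$ will then follow immediately: applying the main assertion with the roles of $\s$ and $\eta$ exchanged shows that $\eta*\s$ also belongs to the class of cycles on $\mathcal{O}_\eta\cup\mathcal{O}_\s=\mathcal{O}_\s\cup\mathcal{O}_\eta$, which is the same equivalence class.

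For the base case $r=1$, let $c$ denote the unique common element. Since $|\mathcal{O}_\s\cap\mathcal{O}_\eta|<2$, the cycle-times-cycle rule given in Section \ref{sec:monoid} reduces to $\s*\eta=\s\cdot\eta$. Writing $\s=(c,a_1,\ldots,a_{k-1})$ and $\eta=(c,b_1,\ldots,b_{l-1})$ and tracking the image of each element under the composition $\s\cdot\eta$ yields a single cycle of length $k+l-1$ whose nontrivial orbit is exactly $\mathcal{O}_\s\cup\mathcal{O}_\eta$.

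For the inductive step $r\geq 2$, the plan is to pick $i,j\in\mathcal{O}_\s\cap\mathcal{O}_\eta$ that are adjacent in $\s$, meaning $j=\s(i)$. Setting $\tau_1=(i,j)$, equations \eqref{eq:*operation_r}--\eqref{eq:*operation_l} give $\s*\eta=\xi_1*\eta$, where $\xi_1=\s\cdot\tau_1$ turns out to be a cycle of length $k-1$ on $\mathcal{O}_\s\setminus\{j\}$ (since $\xi_1$ agrees with $\s$ except it sends $i$ to $\s(j)$ and fixes $j$). The intersection size $|\mathcal{O}_{\xi_1}\cap\mathcal{O}_\eta|$ drops to $r-1$, so the inductive hypothesis yields $\xi_1*\eta\in[\pi']$ with $\pi'$ a cycle on $\mathcal{O}_{\xi_1}\cup\mathcal{O}_\eta=\mathcal{O}_\s\cup\mathcal{O}_\eta$ (using that $j\in\mathcal{O}_\eta$). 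If adjacency in $\s$ is unavailable but adjacency in $\eta$ is, a symmetric argument using $\s*\eta=\s*\eta_1$ with $\eta_1=\tau_1\cdot\eta$ yields the same conclusion.

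The hard part will be the configuration in which no pair $i,j\in\mathcal{O}_\s\cap\mathcal{O}_\eta$ is adjacent in either $\s$ or $\eta$---for example $\s=(1,2,3,4,5)$ paired with a cycle $\eta$ whose intersection with $\mathcal{O}_\s$ is the non-adjacent set $\{1,3\}$. In this event, $\xi_1$ splits into two disjoint cycles, violating the single-cycle form assumed by the inductive hypothesis. To address this, I plan to strengthen the induction by allowing $\s$ and $\eta$ to be products of disjoint cycles, or equivalently, to recast the argument in graph-theoretic terms: the nontrivial orbits of any $*$-product of transpositions coincide with the connected components (of size at least two) of the simple graph on $\{1,\ldots,n\}$ whose edges are those transpositions. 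A cycle's transposition decomposition spans a connected subgraph on its orbit, and since $\s$ and $\eta$ share at least one vertex, the edge-union is connected on $\mathcal{O}_\s\cup\mathcal{O}_\eta$, which forces $\s*\eta$ to have the single nontrivial orbit $\mathcal{O}_\s\cup\mathcal{O}_\eta$ and completes the argument.
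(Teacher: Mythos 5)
Your overall strategy---induction on $r=|\mathcal{O}_{\s}\cap\mathcal{O}_{\eta}|$---is genuinely different from the paper's, which instead peels transpositions off a minimal decomposition of $\s$ (an induction on the length of $\s$ applied to the calculation \eqref{eq:*operation_r}) and then reconciles the alternative calculation \eqref{eq:*operation_l} by observing that both involve the same number of group multiplications, hence produce permutations of the same order, which together with the containment of $\mathcal{O}_{\s}\cup\mathcal{O}_{\eta}$ forces the same cycle type. Your base case $r=1$ and your inductive step under the adjacency hypothesis $j=\s(i)$ are correct: $\xi_1=\s\cdot(i,j)$ is then a single cycle on $\mathcal{O}_{\s}\setminus\{j\}$, the intersection size drops to $r-1$, and the union is preserved because $j\in\mathcal{O}_{\eta}$. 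You also deserve credit for spotting the configuration the paper glosses over, in which no common pair is adjacent in either cycle and the reduction destroys the single-cycle form required by your inductive hypothesis.

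However, your resolution of that hard case is a genuine gap rather than a completed argument. The graph-theoretic assertion you invoke---that the nontrivial orbits of any $*$-product of transpositions coincide with the connected components of the associated edge set---is not proved, and it is at least as strong as Lemma \ref{lem:*orbit} itself: in the paper this connectivity picture becomes available only \emph{after} Corollaries \ref{cor:*orbit} and \ref{cor:*commutativity} and Lemma \ref{lem:*associativity} (all of which cite Lemma \ref{lem:*orbit}) establish that the $*$-product of a set of transpositions has a well-defined equivalence class independent of the order and bracketing of the reductions. Appealing to it here is therefore circular. The alternative repair you mention---strengthening the induction to products of disjoint cycles---is the right move, but it changes the statement being proved (the orbits of the product are then governed by the connected components of the intersection pattern of the constituent orbits), so you would need to reformulate the inductive hypothesis in that generality and redo both the base case and the reduction step for it. As written, the proof is incomplete precisely at the case you yourself identify as the hard one.
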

% \begin{lemma}
% 	\label{lem:*orbit}
% 	For any cycles $\s,\eta\in S_n$ such that $|\mathcal{O}_{\s}\cap\mathcal{O}_{\eta}|\geq 1$, $\s*\eta$ is in the equivalent class of cycles {\color{red} with} the largest orbit $\mathcal{O}_{\s}\cup\mathcal{O}_{\eta}$, where $\mathcal{O}_{\s}$ and $\mathcal{O}_{\eta}$ denote the largest orbit of $\s$ and $\eta$, respectively. Consequently, the commutativity $\s*\eta\sim \eta*\s$ holds.
% \end{lemma}

\begin{proof}
	% See Appendix \ref{appx:*orbit_lem}.
	% \subsection{Proof of Lemma \ref{lem:*orbit}}
	% \label{appx:*orbit_lem}
	% \begin{proof}
Recall in \eqref{eq:*operation_r} and \eqref{eq:*operation_l} that different calculations of $\s*\eta$ involve the same number of the `$\cdot$' operations, and thus they will result in different permutations with the same order in $S_n$, where the order of an element in a group is defined to be the cardinality of the cyclic subgroup generated by this element. Applying the calculation in \eqref{eq:*operation_r} yields that $\pi=\s*\eta$ is a cycle with the nontrivial orbit $\mathcal{O}_{\s}\cup\mathcal{O}_{\eta}$ (can be shown in detail by induction on the length of $\s$). Let $k=|\mathcal{O}_{\s}\cup\mathcal{O}_{\eta}|$ be the order of $\pi$, and $\pi'$ be a permutation obtained by using \eqref{eq:*operation_l}, %an another computation of $\s*\eta$, 
then $\pi'$ is also of order $k$ and has a nontrivial orbit containing $\mathcal{O}_{\s}\cup\mathcal{O}_{\eta}$, which implies $\pi'$ must be a $k$-cycle with the nontrivial orbit  $\mathcal{O}_{\s}\cup\mathcal{O}_{\eta}$. This concludes $\s*\eta\in[\pi]$.
\end{proof}

Lemma \ref{lem:*orbit} indicates that the $*$ operation avoids the degeneracy case described in Remark \ref{rem:degeneracy} when operating on cycles with overlapping elements. On the other hand, for disjoint cycles $\s,\eta\in S_n$, the $*$ operation is reduced to the `$\cdot$' operation, and thus $\s*\eta\in[\s\cdot\eta]$. Because every permutation in $S_n$ is a product of disjoint cycles, the above observation immediately leads to the following consequences.

% ========================== Corollary 4 =======================
\begin{corollary}
	\label{cor:*orbit}
	For any permutations $\s,\eta,\s',\eta' \in S_n$, if $\s\sim\s'$ and $\eta\sim\eta'$, then $\s*\eta\sim\s'*\eta'$.
\end{corollary}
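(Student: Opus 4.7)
The central strategy is to prove the stronger structural fact that the set of nontrivial orbits of $\s*\eta$ is determined entirely by the sets of nontrivial orbits of $\s$ and $\eta$. Once this is in hand, the corollary is immediate: $\s \sim \s'$ means $\s$ and $\s'$ share the same nontrivial orbit set, similarly for $\eta \sim \eta'$, and therefore $\s*\eta$ and $\s'*\eta'$ share the same nontrivial orbit set, i.e., $\s*\eta \sim \s'*\eta'$.

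To establish this structural fact, I would first decompose $\s$ and $\eta$ into disjoint cycles: $\s = c_1 \cdots c_p$ with nontrivial orbits $A_1,\dots,A_p$, and $\eta = d_1 \cdots d_q$ with nontrivial orbits $B_1,\dots,B_q$. Since disjoint cycles commute under both the group operation and $*$ (the latter being immediate from the definition \eqref{eq:*} applied to the transpositions in their minimal decompositions, which are themselves pairwise disjoint), these products may be freely reordered. Then introduce the bipartite overlap graph $G$ on vertex set $\{A_1,\dots,A_p\} \cup \{B_1,\dots,B_q\}$ with an edge joining $A_i$ and $B_j$ whenever $A_i \cap B_j \neq \varnothing$, and let $C_1,\dots,C_r$ denote the unions of orbit-labels within each connected component of $G$. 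The claim to prove is that the nontrivial orbits of $\s*\eta$ are exactly $C_1,\dots,C_r$.

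I would prove this claim by induction on $p+q$. The base case $p=q=1$ is handled directly: if $A_1 \cap B_1 \neq \varnothing$, Lemma \ref{lem:*orbit} gives that $c_1 * d_1$ is a cycle with nontrivial orbit $A_1 \cup B_1$; if $A_1 \cap B_1 = \varnothing$, the minimal transposition decompositions of $c_1$ and $d_1$ involve no common transpositions, and so $c_1 * d_1 = c_1 \cdot d_1$ is a permutation with nontrivial orbits $A_1$ and $B_1$. For the inductive step, identify a connected component of $G$, group together all $c_i$ and $d_j$ whose orbits appear in that component, and apply Lemma \ref{lem:*orbit} iteratively to merge them into a single cycle on the corresponding $C_k$; the remaining cycles, belonging to other components, act on disjoint index sets and hence commute with everything already processed and with each other.

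The main obstacle will be the bookkeeping of the inductive step while using only Lemma \ref{lem:*orbit}, because at this point in the paper associativity of $*$ on $S_n/\sim$ has not yet been established, so one cannot freely reparenthesize $*$-expressions. Each rearrangement must therefore be justified either by disjointness at the transposition level (which makes the relevant $*$-operations literally commutative since they coincide with the group operation on disjoint transpositions) or by direct appeal to Lemma \ref{lem:*orbit}, which handles the overlapping-cycle case and delivers the needed orbit-level commutativity $c*d \sim d*c$. A subtle point to verify along the way is that different legitimate evaluation orders of the $*$-product (e.g.\ the two options appearing in \eqref{eq:*operation_r} and \eqref{eq:*operation_l}) yield permutations with identical nontrivial orbit sets; this again reduces to Lemma \ref{lem:*orbit}, which asserts precisely this agreement for cycles.
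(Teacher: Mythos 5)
Your proposal is correct and follows essentially the same route as the paper: the paper's proof of this corollary is a one-line appeal to Lemma \ref{lem:*orbit} plus the disjoint-cycle decomposition, and your argument is a detailed elaboration of exactly that, with the bipartite overlap graph serving as bookkeeping for which cycles merge into a common orbit. The subtleties you flag---reparenthesizing $*$-expressions before associativity is available, and checking that the two evaluation rules \eqref{eq:*operation_r} and \eqref{eq:*operation_l} give the same orbit set---are genuine, but the paper glosses over them as well, so your treatment is if anything more careful than the original.
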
	
\begin{proof} 
	The proof follows from Lemma \ref{lem:*orbit} and the fact that every permutation in $S_n$ is a product of disjoint cycles.
\end{proof}

% ========================== Corollary 5 =======================
\begin{corollary}
	\label{cor:*commutativity}
	For any permutations $\s,\eta\in S_n$, $\s*\eta\sim\eta*\s$.
\end{corollary}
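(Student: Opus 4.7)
The plan is to reduce the commutativity claim on arbitrary permutations to the already-established pairwise commutativity of cycles, using the disjoint cycle decomposition together with the associativity of $*$ (Lemma~\ref{lem:*associativity}) and the invariance of $\sim$ under $*$ (Corollary~\ref{cor:*orbit}). First, write $\sigma = c_1 \cdots c_p$ and $\eta = d_1 \cdots d_q$ as products of disjoint cycles in $(S_n,\cdot)$. Since disjoint cycles share no transposition in any minimal decomposition, the definition of $*$ (equation \eqref{eq:*}) together with the discussion preceding \eqref{eq:*operation_r} gives $\sigma = c_1 * \cdots * c_p$ and $\eta = d_1 * \cdots * d_q$. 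Thus it suffices to prove that any two cycles $c, d \in S_n$ satisfy $c * d \sim d * c$, and then lift this pairwise statement through associativity.

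For the pairwise step, there are two cases. If $c$ and $d$ have disjoint nontrivial orbits, then both minimal decompositions use transpositions on disjoint index sets, so $c * d = c \cdot d$ and $d * c = d \cdot c$; disjoint cycles commute under the ordinary group operation, so $c * d = d * c$ and in particular $c * d \sim d * c$. If instead $|\mathcal{O}_c \cap \mathcal{O}_d| \geq 1$, Lemma~\ref{lem:*orbit} directly asserts $c * d \sim d * c$. This handles every pair of cycles.

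To promote this to a proof for $\sigma$ and $\eta$, I would argue that any rearrangement of the product $c_1 * \cdots * c_p * d_1 * \cdots * d_q$ obtained by a sequence of adjacent transpositions of cycles yields a $\sim$-equivalent permutation. Concretely, using Lemma~\ref{lem:*associativity} I can isolate any adjacent pair $x * y$ inside the product; by the pairwise result, $x * y \sim y * x$; and by Corollary~\ref{cor:*orbit}, substituting $y * x$ in place of $x * y$ inside the larger $*$-product preserves the equivalence class. Since the symmetric group on the $p+q$ factors is generated by adjacent transpositions, iterating this yields
\[
\sigma * \eta \;=\; c_1 * \cdots * c_p * d_1 * \cdots * d_q \;\sim\; d_1 * \cdots * d_q * c_1 * \cdots * c_p \;=\; \eta * \sigma,
\]
as desired.

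The main obstacle I anticipate is making the bookkeeping of the rearrangement step rigorous: the $*$-operation is not literally commutative on $S_n$, only compatible with $\sim$, so each swap of adjacent cycles produces a representative in a possibly different element of the equivalence class, and one must invoke Corollary~\ref{cor:*orbit} at every step to ensure the class is preserved under further $*$-operations on both sides. Once this inductive structure on the number of adjacent swaps is set up cleanly, the argument reduces to the already-proved pairwise case.
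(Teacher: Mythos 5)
Your proof is correct and follows essentially the same route as the paper's: reduce to the cycle case via the disjoint-cycle decomposition, handling overlapping cycles by Lemma~\ref{lem:*orbit} and disjoint ones by ordinary commutativity of the group operation. You merely make explicit the rearrangement bookkeeping (via Lemma~\ref{lem:*associativity} and Corollary~\ref{cor:*orbit}) that the paper's two-line proof leaves implicit.
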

\begin{proof} 
	By Lemma \ref{lem:*orbit}, the commutativity of $*$ holds for cycles. Because every permutation in $S_n$ is a product of disjoint cycles that are commutable, the result follows.
% \begin{comment}
% 	This is a direct consequence of Lemma \ref{lem:*orbit}.
% 		\cred{Let $\s=\s_1\cdots\s_k=\s_1*\dots*\s_k$ and $\eta=\eta_1\cdots\eta_l=\eta_1*\dots*\eta_l$ for two sets of pairwise disjoint cycles $\{\s_1,\dots,\s_k\}$ and $\{\eta_1,\dots,\eta_l\}$ in $S_n$. Without loss of generality, assume that each of $\s_1,\dots,\s_s, s\leq k$ has a nonempty intersection with all of $\eta_1,\dots,\eta_r,r\leq l$, and $\s_{s+1},\dots,\s_k$ are pairwise disjoint with $\eta_1,\dots,\eta_l$, also  $\eta_{r+1},\dots,\eta_l$ are pairwise disjoint with $\s_{1},\dots,\s_k$. (polish) Because commutativity of $*$ always holds for any disjoint product of permutations, $\s*\eta=(\s_1*\dots*\s_s*\eta_1*\dots*\eta_r)*\s_{s+1}*\dots*\s_k*\eta_{r+1}*\dots*\eta_l$ and $\eta*\s=(\eta_1*\dots*\eta_r*\s_1*\dots*\s_l)*\s_{s+1}*\dots*\s_k*\eta_{r+1}*\dots*\eta_l$. Applying Lemma \ref{lem:*orbit} repeatedly, the products $\s_1*\dots*\s_s*\eta_1*\dots*\eta_r$ and $\eta_1*\dots*\eta_r*\s_1*\dots*\s_l$ are cycles with the largest orbit $\mathcal{O}_{\s_1}\cup\dots\cup\mathcal{O}_{\s_s}\cup\mathcal{O}_{\eta_1}\cup\dots\cup\mathcal{O}_{\eta_t}$. Therefore, $\s*\eta$ and $\eta*\s$ are both permutations with nontrivial orbits $\mathcal{O}_{\s_1}\cup\dots\cup\mathcal{O}_{\s_s}\cup\mathcal{O}_{\eta_1}\cup\dots\cup\mathcal{O}_{\eta_r}$ and $\mathcal{O}_{\s_{s+1}},\dots,\mathcal{O}_{\s_{k}},\mathcal{O}_{\eta_{r+1}},\dots,\mathcal{O}_{\eta_{l}}$, which implies $\s*\eta\sim\eta*\s$.}
% \end{comment}
\end{proof}

%Corollary \ref{cor:*orbit} implies that the binary operation $*$ is well-defined for the equivalent classes in $S_n$. In other words, $*$ is compatible with the equivalence relation $\sim$. It follows that $*$ induces a well-defined binary operation on $S_n/\sim$ by \cred{$[\s]*[\eta]=[\s*\eta]$} for any $[\s],[\eta]\in S_n/\sim$, where we also use $*$ to denote the induced operation on $S_n/\sim$. Additionally, Corollary \ref{cor:*commutativity} implies commutativity of $*$ on $S_n/\sim$, i.e., $[\s]*[\eta]=[\eta]*[\s]$ for any $[\s],[\eta]\in S_n/\sim$. %, i.e., the binary operation $*$ on $S_n/\sim$ is commutative. 
Corollary \ref{cor:*commutativity} implies that $*$ is commutative on $S_n/\sim$. Fortunately, not only the commutativity of $*$ holds, but also the associativity as shown in the following lemma.

%========================= Lemma 4 ========================
\begin{lemma}
	\label{lem:*associativity}
	For any permutations $\s,\eta,\xi\in S_n$, $(\s*\eta)*\xi\sim\s*(\eta*\xi)$. 
\end{lemma}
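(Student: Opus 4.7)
The plan is to reduce the associativity of $*$ to a purely combinatorial statement about collections of disjoint subsets of $\{1,\dots,n\}$. By Corollary \ref{cor:*orbit}, the operation $*$ descends to a well-defined binary operation on the quotient $S_n/\sim$, and by Corollary \ref{cor:*commutativity} this induced operation is commutative. Since every equivalence class $[\pi]\in S_n/\sim$ is uniquely specified by its collection of nontrivial orbits, denoted $P(\pi)$, it suffices to show that the nontrivial orbit collections of $(\sigma*\eta)*\xi$ and $\sigma*(\eta*\xi)$ agree as unordered families.

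The key step is to establish the following \emph{merging rule} characterizing $P(\sigma*\eta)$ directly in terms of $P(\sigma)$ and $P(\eta)$: form the union $P(\sigma)\cup P(\eta)$ as a family of subsets of $\{1,\dots,n\}$, and then repeatedly replace any two members that share a common index by their union, until all remaining members are pairwise disjoint. The resulting family is exactly $P(\sigma*\eta)$. To prove this, decompose $\sigma$ and $\eta$ into their disjoint-cycle factorizations and apply Lemma \ref{lem:*orbit} pairwise: cycles with overlapping orbits merge into a single cycle on the union of their orbits, while cycles with disjoint orbits commute and their $*$-product coincides with their ordinary product, leaving both orbits intact. Because the disjoint-cycle case reduces to the group operation $\cdot$, which is already associative, the pairwise reduction can be iterated over all cycle-pairs without circular reliance on the associativity we are trying to prove.

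With the merging rule in hand, associativity follows from an invariant characterization: the merged collection obtained from $P(\sigma)\cup P(\eta)\cup P(\xi)$ is precisely the set of vertex sets of the connected components of the hypergraph on $\bigcup_{\pi}\bigcup P(\pi)$ whose hyperedges are the orbits in $P(\sigma)\cup P(\eta)\cup P(\xi)$. Connected components of a hypergraph do not depend on the order in which the edges are introduced, so both $P((\sigma*\eta)*\xi)$ and $P(\sigma*(\eta*\xi))$ coincide with this invariant family. Hence $(\sigma*\eta)*\xi\sim\sigma*(\eta*\xi)$.

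The main obstacle I anticipate is precisely the justification of the merging rule for general (non-cycle) permutations: Lemma \ref{lem:*orbit} treats only pairs of single cycles, and a naive extension to products of disjoint cycles would seem to presuppose associativity. The way around this is to fix canonical representatives from \eqref{eq:*operation_r}--\eqref{eq:*operation_l}, carry out the cycle-by-cycle reduction entirely within $(S_n,\cdot)$ using the fact that disjoint cycles $*$-multiply as they $\cdot$-multiply, and then pass to $S_n/\sim$ only at the very end via Corollary \ref{cor:*orbit}. Once this technical reduction is made, the remaining combinatorial step about hypergraph components is essentially immediate.
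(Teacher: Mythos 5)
Your proof is correct in substance but follows a genuinely different route from the paper's. The paper reduces to the case where $\s,\eta,\xi$ are single cycles and then performs an explicit case analysis on how their nontrivial orbits intersect (all pairwise disjoint; one orbit meeting both others; one cycle disjoint from the other two), showing in each case that both parenthesizations yield permutations with the same orbits via Lemma \ref{lem:*orbit}. You instead identify each class in $S_n/\sim$ with its family of nontrivial orbits, establish a merging rule for $P(\s*\eta)$, and then observe that the triple product's orbit family is the set of connected components of the hypergraph whose hyperedges are all the orbits of $\s$, $\eta$, $\xi$ --- an invariant manifestly independent of the order of composition. Your approach buys uniformity and generality: it handles arbitrary permutations and, in fact, arbitrary-length $*$-products in one stroke, whereas the paper's ``without loss of generality, consider cycles'' reduction itself tacitly uses the merging behavior you make explicit (and the paper's Lemma \ref{lem:*orbit} formally covers only pairs of cycles, so e.g.\ the subcase where $\s*\eta$ is already a product of two disjoint cycles and is then $*$-multiplied by $\xi$ is handled only informally there). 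The cost is that the merging rule for non-cycle permutations is the crux and is the least rigorous part of your sketch; you correctly flag the circularity danger and your proposed fix --- performing the cycle-by-cycle reduction on canonical representatives inside $(S_n,\cdot)$, using that disjoint cycles $*$-multiply as they $\cdot$-multiply, and passing to $S_n/\sim$ only at the end via Corollary \ref{cor:*orbit} --- is sound, but it should be written out (e.g.\ by induction on the number of cycle factors) for the argument to be complete.
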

\begin{proof}
	% See Appendix \ref{appx:*associativity}.
	% \subsection{Proof of Lemma \ref{lem:*associativity}}
	% \label{appx:*associativity}
	% \begin{proof}
	Because every permutation can be decomposed as a product of disjoint cycles under the binary operation $*$, without loss of generality, we only consider the case for cycles. %where can assume that all of $\s,\eta$ and $\xi$ are cycles.
 
	If $\s,\eta$ and $\xi$ are pairwise disjoint cycles, then the $*$ operation is reduced to the group operation `$\cdot$' on $S_n$,  i.e., $(\s*\eta)*\xi=(\s\cdot\eta)\cdot\xi$ and $\s*(\eta*\xi)=\s\cdot(\eta\cdot\xi)$. Because `$\cdot$', as a group operation, is associative, i.e., $(\s\cdot\eta)\cdot\xi=\s\cdot(\eta\cdot\xi)$, then we have $(\s*\eta)*\xi=(\s\cdot\eta)\cdot\xi=\s\cdot(\eta\cdot\xi)=\s*(\eta*\xi)$, which implies $(\s*\eta)*\xi\sim\s*(\eta*\xi)$. 

	On the other hand, if $\s,\eta$ and $\xi$ are not pairwise disjoint, let $\mathcal{O}_{\s},\mathcal{O}_{\eta}$ and $\mathcal{O}_{\xi}$ denote their nontrivial orbits, respectively, then there are two possibilities: (i) one of the three sets $\mathcal{O}_{\s},\mathcal{O}_{\eta}$ and $\mathcal{O}_{\xi}$ has nonempty intersections with both of the other two sets, then according to Lemma \ref{lem:*orbit}, both of $(\s*\eta)*\xi$ and $\s*(\eta*\xi)$ are cycles with the nontrivial orbit $\mathcal{O}_{\s}\cup\mathcal{O}_{\eta}\cup\mathcal{O}_{\xi}$; and (ii) one of $\s,\eta$ and $\xi$ is disjoint with the other two, without loss of generality, assuming that $\eta$ is disjoint with $\s$ and $\xi$. Therefore, $\s*\eta=\s\cdot\eta=\eta\cdot\s=\eta*\s$ is a permutation composed of two disjoint cycles whose nontrivial orbits are $\mathcal{O}_{\s}$ and $\mathcal{O}_{\eta}$, respectively. Hence, we have $(\s*\eta)*\xi=\eta\cdot\s*\xi=\eta\cdot(\s*\xi)$. According to Lemma \ref{lem:*orbit}, $\s*\xi$ is a cycle with the nontrivial orbit $\mathcal{O}_{\s}\cup\mathcal{O}_{\xi}$. Since $\eta$ is disjoint from $\s$ and $\xi$, $\eta\cdot(\s*\xi)$ is a permutation composed of two disjoint cycles with the respective nontrivial orbits $\mathcal{O}_{\eta}$ and $\mathcal{O}_{\s}\cup\mathcal{O}_{\xi}$. Similarly, $\s*(\eta*\xi)=\s*(\eta\cdot\xi)=\s*(\xi\cdot\eta)=(\s*\xi)\cdot\eta$ is also a permutation as a product of two disjoint cycles whose nontrivial orbits are $\mathcal{O}_{\eta}$ and $\mathcal{O}_{\s}\cup\mathcal{O}_{\xi}$, respectively. Therefore,  $(\s*\eta)*\xi\sim\s*(\eta*\xi)$ also holds.
\end{proof}

\bibliographystyle{ieeetr}
\footnotesize
\bibliography{SOn_Sn}

%\begin{IEEEbiography}[{\includegraphics[width=1in,height=1.25in,clip,keepaspectratio]{WeiZhang}}]
%{Wei Zhang received his BE degree in Automation from Beijing Jiaotong University, Beijing, China, in 2010, and his MS degree in Systems Science and Mathematics from Washington University, St. Louis, MO, in 2013. He is currently a PhD candidate in Systems Science and Mathematics at Washington University. His research interests are in the areas of mathematical and applied control and dynamic data learning.}\end{IEEEbiography} %\vspace{-12cm}
%
%
%\begin{IEEEbiography}[{\includegraphics[width=1in,height=1.25in,clip,keepaspectratio]{jrshin}}]{Jr-Shin Li (M'06-SM'18) received his BS and MS degrees from National Taiwan University, and his PhD degree in Applied Mathematics from Harvard University in 2006. He is currently Associate Professor in Electrical and Systems Engineering with a joint appointment in the Division of Biology \& Biomedical Sciences at Washington University in St. Louis. His research interests are in the areas of control theory, computational mathematics, optimization, learning, and complex networks. His current work involves the control of large-scale complex systems with applications ranging from quantum mechanics and neuroscience to brain medicine. He has been a recipient of the NSF Career Award in 2007 and the AFOSR Young Investigator Award in 2009.}\end{IEEEbiography}

\end{document}